\def\NZQ{\mathbb} 
\def\bbN{{\NZQ N}}              
\def\NN{{\NZQ N}}
\def\bbM{{\NZQ M}}
\def\ZZ{{\NZQ Z}}
\def\EE{{\NZQ E}}
\def\frk{\mathfrak}               
\def\aa{{\frk a}}
\def\mm{{\frk m}}
\def\m{{\frk m}}
\def\nn{{\frk n}}
\def\ab{{\bold a}}
\def\bb{{\bold b}}
\def\cb{{\bold c}}
\def\al{\alpha}
\def\bl{\beta}
\def\opn#1#2{\def#1{\operatorname{#2}}} 
\opn\chara{char} \opn\length{\ell} \opn\pd{pd} \opn\rk{rk}
\opn\projdim{proj\,dim} \opn\injdim{inj\,dim} \opn\rank{rank}
\opn\depth{depth} \opn\grade{grade} \opn\height{height}
\opn\Det{Det}\opn\size{size}
\opn\embdim{emb\,dim} \opn\codim{codim}
\opn\Tr{Tr} \opn\bigrank{big\,rank}
\opn\superheight{superheight}\opn\lcm{lcm}
\opn\trdeg{tr\,deg}
\opn\reg{reg} \opn\lreg{lreg} \opn\ini{in} \opn\lpd{lpd}
\opn\size{size}\opn{\mult}{mult}
\opn{\Cl}{Cl}
\opn{\PF}{PF}
\opn{\RF}{RF}
\opn{\MC}{MC}
\opn{\Kos}{Kos}
\opn{\rmH}{H}
\opn\div{div} \opn\Div{Div} \opn\cl{cl} \opn\Cl{Cl}
\opn\Spec{Spec} \opn\Supp{Supp} \opn\supp{supp} \opn\Sing{Sing}
\opn\Ass{Ass} \opn\Min{Min} \opn\cl{cl} 
\opn\Ann{Ann} \opn\Rad{Rad} \opn\Soc{Soc}
\opn\Syz{Syz} \opn\Im{Im} \opn\Ker{Ker} \opn\Coker{Coker}
\opn\Am{Am} \opn\Hom{Hom} \opn\Tor{Tor} \opn\Ext{Ext}
\opn\End{End} \opn\Aut{Aut} \opn\id{id} \opn\ini{in}\opn\GCD{GCD}
\opn\nat{nat} \opn\Soc{Soc}
\opn\emb{emb}
\opn\pff{pf}
\opn\Pf{Pf} \opn\GL{GL} \opn\SL{SL} \opn\mod{mod} \opn\ord{ord}
\opn\Gin{Gin}
\opn\Hilb{Hilb}\opn\adeg{adeg}\opn\std{std}\opn\ip{infpt}
\opn\Pol{Pol}
\opn\sat{sat}
\opn\Var{Var}
\opn\Gen{Gen}
\opn\lex{lex}
\opn\div{div}
\opn\NUF{NUF}
\opn\mNUF{mNUF}
\opn\type{type}
\opn\PF{PF}
\opn\Fr{F}
\opn\Ap{Ap}
\opn\Deg{Deg}
\opn\aff{aff} \opn\con{conv} \opn\relint{relint} \opn\st{st}
\opn\lk{lk} \opn\cn{cn} \opn\core{core} \opn\vol{vol}
\opn\link{link} \opn\star{star}
\opn\gr{gr}
\def\pot#1#2{#1[\kern-0.28ex[#2]\kern-0.28ex]}
\opn\dirlim{\underrightarrow{\lim}}
\opn\inivlim{\underleftarrow{\lim}}
\let\to=\rightarrow
\def\Implies{\ifmmode\Longrightarrow \else
        \unskip${}\Longrightarrow{}$\ignorespaces\fi}
\def\implies{\ifmmode\Rightarrow \else
        \unskip${}\Rightarrow{}$\ignorespaces\fi}
\def\iff{\ifmmode\Longleftrightarrow \else
        \unskip${}\Longleftrightarrow{}$\ignorespaces\fi}
\newtheorem{Theorem}{Theorem}[section]
\newtheorem{thm}{Theorem}[section]
\newtheorem{lem}[Theorem]{Lemma}
\newtheorem{cor}[Theorem]{Corollary}
\newtheorem{prop}[Theorem]{Proposition}
\newtheorem{rem}[Theorem]{Remark}
\newtheorem{ex}[Theorem]{Example}
\newtheorem{defn}[Theorem]{Definition}
\newtheorem*{acknowledgement}{Acknowledgement}
\let\epsilon\varepsilon
\let\kappa=\varkappa
\def\qed{\ifhmode\textqed\fi
      \ifmmode\ifinner\quad\qedsymbol\else\dispqed\fi\fi}
\def\textqed{\unskip\nobreak\penalty50
       \hskip2em\hbox{}\nobreak\hfil\qedsymbol
       \parfillskip=0pt \finalhyphendemerits=0}
\def\dispqed{\rlap{\qquad\qedsymbol}}
\opn\dis{dis}
\def\pnt{{\raise0.5mm\hbox{\large\bf.}}}
\opn\Lex{Lex}
\opn\int{int}
\newcommand{\inD}[1][\relax]{\def\argone{#1}\def\temprelax{\relax}
  \ifx\argone\temprelax\right.\else\,\middle|#1\right.{}\fi}
\newif\ifbinary
\newcommand{\Z}{\mathbb Z}
\begin{document}

\title{Inverse polynomials of numerical semigroup rings}
\author{Kazufumi Eto}
\address[Kazufumi Eto]{Department of Mathematics, 
Nippon Institute of Technology, Miyashiro, Saitama, Japan 345-8501}
\email{etou@nit.ac.jp}

\author{Kei-ichi Watanabe}
\address[Kei-ichi Watanabe]{Department of Mathematics, College of Humanities and Sciences, 
Nihon University, Setagaya-ku, Tokyo, 156-8550, Japan and 
Organization for the Strategic Coordination of Research and Intellectual Properties, Meiji University
}
\email{watanabe@math.chs.nihon-u.ac.jp}

\thanks{The second author was partially supported by JSPS  KAKENHI 
Grant Number  20K03522}
\subjclass[2020]{Primary 13H10; Secondary 20M14, 13D02, 14M10}
\begin{abstract}
We study the defining ideal of a numerical semigroup ring $k[H]$ 
 using the inverse polynomial attached to the Artinian ring $k[H]/(t^h)$.
We give a criterion of $H$ to be symmetric or almost symmetric 
using the annihilator of the inverse system. Also we give characterization of 
symmetric numerical semigroups with small multiplicity and give a new    
proof of Bresinsky's Theorem for symmetric semigroups generated by $4$ 
elements. 
\end{abstract}

\maketitle

\section*{Introduction}

Numerical semigroups and semigroup rings are very important objects in the study of singularities of
dimension $1$ (See \S 1 for the definitions).  Let $H =\langle n_1,\ldots ,n_e\rangle$ be a numerical
semigroup minimally generated by $\{n_1,\ldots,n_e\}$ and let $K[H] = K[ t^{n_1}, \ldots , t^{n_e}]$
be the semigroup ring of $H$, where $t$ is a variable and $K$ is any field.
We can represent $K[H]$ as a quotient ring of a polynomial ring $S=K[x_1,\ldots , x_e]$
 as $K[H] = S/I_{H}$, where $I_H$ is the kernel of the $K$-algebra homomorphism which maps
  $x_i\to t^n_i$.  We call $I_H$ the defining ideal of $K[H]$. The ideal $I_H$ is a binomial ideal, whose binomials correspond to   pairs of factorizations of elements of $H$. 

\bigskip

The concept of inverse system introduced by Macaulay is very useful to study 
Artinian Gorenstein rings. If $A = S/ I$ is an Artinian Gorenstein local 
ring which is a quotient of a polynomial ring $S=k[x_1,\ldots , x_e]$ over
a field $k$ and $\sqrt{I} = \nn := (x_1,\ldots , x_e)$, then putting 
$\EE= E_S( S/\nn) \cong k[x_1^{-1},\ldots , x_e^{-1}]$ to be the injective 
envelope of $S/\nn$, then $[0:_{\EE} I]$ is generated by s single element 
$J\in \EE$ since $A$ is Gorenstein and we call $J$ the inverse polynomial 
of $I$. See  \ref{Inv-syst}, \ref{Inv-polyn}.  

The aim of this paper is to analyze the structure of $k[H]$ by the aid of 
the inverse polynomial corresponding to $h \in H$.

In particular, if $H$ is symmetric, since $K[H]/(t^h)$ is 
Artinian Gorenstein ring, $K[H]/(t^h)$ corresponds to an inverse 
polynomial $J_{H,h}\in \EE$. The Frobenius number $\Fr(H)$ of $H$ is 
a very important number which is equal to the $a$ invariant of 
$k[H]$ and in particular, we can hope that 
determining the shape of $J_{\Fr(H)+n_i}$ will be very useful to analyze 
the structure of $k[H]$. 

Also, we can hope also to analyze almost symmetric semigroups,
or, equivalently, the almost Gorenstein numerical semigroup rings
(\cite{BF}) from the inverse polynomial $J_{\Fr(H) + n_i}$ 
(\ref{AS}).   
 
In \S 1, we introduce basic concepts concerning numerical semigroups and 
the semigroup rings and the inverse polynomials.

In \S 2, we introduce the inverse polynomial $J_{H,h}$ attached to 
a numerical semigroup $H$ and element $h\in H$. In Theorem \ref{AS}
we give a characterization of symmetric and almost symmetric semigroups
in terms of $\Ann_S( J_{H, \Fr(H) + n_i})$.  We also give the characterization 
of symmetric numerical semigroups of small multiplicity using inverse polynomial. We will show that the characterization of pseudo-symmetric 
numerical semigroups generated by 3 elements is obtained directly using 
the inverse polynomial. 

In \S 3 we discuss about the inverse polynomial of a semigroup $H$ which is 
obtained by gluing of numerical semigroups $H_1$ and $H_2$. 
Since a numerical semigroups which is a complete intersection is obtained 
by successive gluings, we determine the shape of inverse polynomial 
when $H$ is a complete intersection.

In \S 4 we investigate the case when $\Fr(H) + n_i$ has unique factorization 
(that is, $J_{H, \Fr(H) + n_i}$ is a monomial) and conclude that $H$ is "free" (\ref{monomial}).

In \S 5, we deduce structure theorems for symmetric numerical semigroups 
of small multiplicity by classifying inverse polynomials $J$ with  
small $\ell_S( S/\Ann_S(J))$. 

in \S 6, we give a proof of Bresinsky's Theorem on symmetric numerical 
semigroups generated by $4$ elements which is not a complete intersection 
using the inverse polynomial $J_{H, \Fr(H)+n_i}$.

In \S 7, we calculate the inverse polynomial $J_{H, \Fr(H)+n_i}$ 
in the case $H$ is symmetric and generated by $4$ elements and not a 
complete intersection. 
 
\begin{acknowledgement}
This work began inspired by the talk of M.E. Rossi titled 
"A constructive approach to one-dimensional Gorenstein k-algebras" 
on Dec. 1, 2020 at Virtual Commutative Algebra Seminar at IIT Bombay.
The 2nd author thank M.E. Rossi fro the talk and also the organizers
 of the seminar, especially to Jugal Verma for their efforts to continue
  the Seminar and for inviting the second author to this Seminar.  
\end{acknowledgement}

\section{Basic concepts}
In this section we fix notation and recall the basic
 definitions and concepts which will be used in this paper.

\begin{defn}\label{defk[H]}
 Let $H=\langle n_1,\ldots ,n_e\rangle \subset \NN$ 
be a  numerical semigroup minimally generated by $e$ elements.
We will use the following notations.
\begin{enumerate}
\item Let $k$ be any field and let 
$k[H]= k[t^h\;|\; h\in H]\subset k[t]$ be the semigroup ring of
 $H$ over a field $k$. We put $H_+ = H \setminus \{0\}$ and we denote
  $\m_H$ the unique homogenous maximal ideal of $k[H]$. 
  
\item   Let $S=k[x_1,\ldots,x_e]$ be a 
polynomial ring over $k$ in the indeterminates $x_1,\ldots,x_e$. 
We denote  $\Phi=\Phi_H : S \to k[H]$ the surjective map defined by 
$\Phi_H(x_i) = t^{n_i}$. We always denote $I_H = \Ker( \Phi_H)$ be 
the defining ideal of $k[H]$. We consider $S$ to be a graded ring 
over $k$ by putting $\deg x_i = n_i$ so that $\Phi_H$ is a homomorphism 
of graded rings.  Note that $I_H$ is generated by 
\[\{\prod_{i=1}^ex_i^{\al_i}-\prod_{i=1}^ex_i^{\bl_i}\;|\; 
\sum_{i=1}^e\al_in_i=\sum_{i=1}^e\bl_in_i\}.\] 

\item For an ideal $I$ of a ring $A$, we denote by $\mu(I)$ the
 minimal number of generator systems of $I$.   

\item For any $i$, $1\le i\le e$, we denote $S_i = S/x_iS$.
Then we have $k[H]/(t^{n_i}) \cong S/ (I_H, x_i) \cong S_i/ (I_H S_i)$.
We note that $\mu(I_H) = \mu((I_H + (x_i))/(x_i))$ for every $i$.

\item For $h\in H_+$, we denote 
\[ \Ap(H, h) := \{ a\in H\;|\; a-h\not\in H\}\]
and call it the Ap\'ery set of $H$ with respect to $h$.
It is easy to see that $\sharp(\Ap(H,h)) = h$ and 
$\{t^a\;|\; a\in  \Ap(H,h)\}$ forms a basis of $k[H]/(t^h)$ over $k$.
\end{enumerate}
\end{defn}

\begin{defn}\label{defH} Let $H=\langle n_1,\ldots ,n_e\rangle \subset \NN$ 
be a  numerical semigroup minimally generated by $e$ elements.
\begin{enumerate}
\item For $n, n'\in \Z$, we denote $n\le_H n'$ if $n'-n\in H$. 
\item For $h\in H$, an expression $h = \sum_{i=1}^e a_in_i$ is called 
a {\it factorization} of $h$ in $H$. If $h$ has only one factorization, then we say $h$ has {\it unique factorizaion}.

\item We denote $\Fr(H) = \max \{ n\in \Z\;|\; n\not\in H\}$ 
and call it the {\it Frobenius number of $H$}.
Also we define $\PF(H) = \{ n\in \Z\;|\; n\not\in H\; {\rm and} \;\forall h\in H_+, n+h \in H \}$. 
We call $f \in \PF(H)$ a pseudo Frobenius number. We define 
$\type(H) = \sharp(\PF(H))$. Note that if $x\in \ZZ, x\not\in H$, then 
for some $f\in \PF(H), x\le_H f$. 
\item We define $\al_i$ to be the minimal positive integer such that
\begin{eqnarray}
\label{minimal}
(*) \quad \al_i n_i = \sum_{j=1, j\ne i}^e \al_{ij} n_j.\nonumber
\end{eqnarray}
Thus $f_i = x_i^{\al_i} - \prod_{j=1, j\ne i}^e x_j^{\al_{ij}}$ 
($1\le i\le e$) is a minimal generator of $I_H$. 
\item We define $H$ is {\it symmetric} if $\type(H) =1$, or
$k[H]$ is Gorenstein  (\cite{Ku}).
\item We define $g(H) = | \NN \setminus H|$. Then we see that $H$ is symmetric 
if and only if $2 g(H) = \Fr(H) +1$. In general, we have 
$2 g(H) \ge \Fr(H) + \type(H)$ and we say that $H$ is {\it almost symmetric}
if $2 g(H) = \Fr(H) + \type(H)$. Then $H$ is almost symmetric if and only if 
$k[H]$ is almost Gorenstein (\cite{BF}). 
\item Let $S$ be a polynomial ring as in \ref{defk[H]} and if $I\subset S$
 is a homogeneous ideal of $S$, then we denote
\[\Deg(S/I) = \{ n \;|\; (S/I)_n \ne 0\} \subset H.\]
\end{enumerate}
\end{defn}

\begin{rem}(\cite{GW}) We see that the local cohomology group 
$\rmH^1_{\mm}(k[H])$ is isomorphic to $k[t,t^{-1}]/k[H]$. 
Hence the set $\{ t^n \;|\; n\in \PF(H)\}$ is a basis of 
$\Soc(\rmH^1_{\mm}(k[H]))$, which implies that 
$\type(H)$ coincides with the Cohen-Macaulay type of $k[H]$ and     
$a( k[H]) : = \max\{ n \;|\; (\rmH^1_{\mm}(k[H]))_n \ne 0\}$ 
coincides with $\Fr(H)$. 
\end{rem}

\bigskip

The concept of inverse system is essential in this paper 
(cf. \cite{GW}, (1.2.11).

\begin{defn}\label{Inv-syst} 
Let $S = k[x_1,\ldots , x_e]$ be a polynomial ring over
 a field $k$ and $\mm = (x_1,\ldots , x_e)$ be the unique homogeneous 
maximal ideal. It is well known that the injective envelope 
$\EE = E_S( S/\mm)$ can be represented by the set of \lq\lq inverse polynomials"
$\EE = k[X_1,\ldots , X_e]$, where we put $X_i = x_i^{-1}$ and 
we define the action of $S$ on $\EE$ by
\[(x_1^{a_1}\cdots x_e^{a_e})\cdot(X_1^{b_1}\cdots X_e^{b_e}) = 
\left\{\begin{array}{ll}  X_1^{b_1-a_1}\cdots X_e^{b_e-a_e} & 
(\forall i, b_i\ge a_i) \\
0 & \rm{(otherwise)} 
\end{array} \right.\]
\end{defn}

The following fact is fundamental in our argument.

\begin{prop}\label{Inv-polyn} Let $I$ be an $\mm$ primary ideal of $S$. Then 
$\Ann_{\EE}(I)$ is generated by $\type(S/I)$ elements as an $S$ module.
In particular, if $S/I$ is Gorenstein, then $\Ann_{\EE}(I)$ is generated by 
single element $J_I$. Thus we have a one-to-one correspondence between 
the set of $\mm$ primary irreducible ideals and the set of inverse polynomials
in  $\EE$ (mod multiplication of an element of $k$). \par  
If $I$ is a graded ideal of, then $\deg J_I = a(A)$, the $a$-invariant of 
$A/I$. 

In particular, if $H$ is a numerical semigroup and $h\in H_+$, then 
$a( k[H]/(t^h)) = \Fr(H) + h$ (\cite{GW}, 3.1.6).  
\end{prop}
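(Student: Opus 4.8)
The engine is Macaulay--Matlis duality. Write $M^{\vee} = \Hom_S(M,\EE)$ for the Matlis dual. Since $I$ is $\mm$-primary, $A = S/I$ has finite length, and directly from the $S$-action on $\EE$ one sees
\[ \Ann_{\EE}(I) = \{\xi \in \EE \mid I\xi = 0\} = \Hom_S(S/I,\EE) = (S/I)^{\vee}. \]
As $\EE = E_S(S/\mm)$ is injective, $(-)^{\vee}$ is exact and contravariant, and on finite length modules it is a faithful anti-equivalence with $M^{\vee\vee}\cong M$. Two facts I will use repeatedly are that it preserves length (so $\dim_k M^{\vee} = \dim_k M$, using $S/\mm = k$) and that it preserves annihilators, $\Ann_S M^{\vee} = \Ann_S M$, since a multiplication map is zero iff its dual is. There is no need to complete $S$, as everything in sight has finite length.

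I would first count generators. Matlis duality interchanges top and socle: there is an isomorphism $M^{\vee}/\mm M^{\vee}\cong \Soc(M)^{\vee}$, so by Nakayama $M^{\vee}$ is minimally generated by $\dim_k\Soc(M)$ elements. Taking $M=S/I$ and recalling that for the Artinian ring $S/I$ one has $\type(S/I)=\dim_k\Soc(S/I)$, I conclude that $\Ann_{\EE}(I)=(S/I)^{\vee}$ is minimally generated by $\type(S/I)$ elements. If $S/I$ is Gorenstein then $\type(S/I)=1$, so $\Ann_{\EE}(I)$ is cyclic, $\Ann_{\EE}(I)=SJ_I$ for a single inverse polynomial $J_I$.

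For the one-to-one correspondence I would argue in both directions. Given an irreducible $\mm$-primary $I$, the quotient $S/I$ has one-dimensional socle, hence is Gorenstein, so by the previous step $\Ann_{\EE}(I)=SJ_I$ is cyclic; and $\Ann_S(\Ann_{\EE}(I))=\Ann_S((S/I)^{\vee})=\Ann_S(S/I)=I$. Conversely, for a nonzero $J\in\EE$ the module $SJ\subseteq\EE$ is finite length, so $\Ann_S(J)$ is $\mm$-primary; moreover $\Soc(\EE)=k\cdot 1$ is simple, so the nonzero submodule $SJ$ has simple socle, whence $S/\Ann_S(J)\cong SJ$ is Gorenstein and $\Ann_S(J)$ is irreducible. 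Finally $SJ\subseteq \Ann_{\EE}(\Ann_S(J))=(S/\Ann_S J)^{\vee}$, and both sides have the same $k$-dimension $\dim_k(S/\Ann_S J)$, forcing equality. Thus $I\mapsto \Ann_{\EE}(I)$ and $J\mapsto \Ann_S(J)$ are mutually inverse between irreducible $\mm$-primary ideals and cyclic submodules of $\EE$; a homogeneous generator $J_I$ lies in the (one-dimensional) minimal-degree component of $\Ann_{\EE}(I)$, hence is unique up to a nonzero scalar, which is the asserted correspondence modulo $k$.

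For the degree statement I would grade $\EE$ so that $(-)^{\vee}$ becomes the graded Matlis dual, with $((S/I)^{\vee})_n\cong\Hom_k((S/I)_{-n},k)$; in the paper's sign convention $\deg(X_1^{b_1}\cdots X_e^{b_e})=\sum_i b_in_i$ this makes the duality degree-preserving. The generator $J_I$ of the cyclic module $\Ann_{\EE}(I)$ corresponds to the socle of $S/I$, which for a graded Artinian Gorenstein ring is concentrated in its top degree $a(S/I)$; hence $\deg J_I = a(S/I)$. The closing \lq\lq in particular" is then the computation $a(k[H]/(t^h))=\Fr(H)+h$ of \cite{GW}, 3.1.6. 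I expect the only real friction to be this last bookkeeping --- fixing the grading and sign on $\EE$ so that duality is honestly degree-preserving and the socle of $S/I$ lands in degree $a(S/I)$, and pinning the \lq\lq up to $k$" ambiguity by taking homogeneous generators; the algebraic content, that $\Ann_{\EE}(I)$ is the Matlis dual of $S/I$ and that duality trades minimal generators for socle, is standard once injectivity and reflexivity on finite length modules are granted.
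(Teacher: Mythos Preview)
The paper does not supply a proof of this proposition; it is stated as background, with a pointer to \cite{GW} for the graded facts and the computation $a(k[H]/(t^h))=\Fr(H)+h$. Your Matlis--Macaulay duality argument is the standard justification and is correct in substance: identifying $\Ann_{\EE}(I)$ with $(S/I)^{\vee}$, using that duality exchanges socle and top to count generators, and checking the bijection via $\Ann_S(\Ann_{\EE}(I))=I$ and the essential-socle argument for $SJ\subseteq\EE$. So there is nothing to compare against---your write-up is exactly the kind of proof one would supply if the proposition were not taken for granted.

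One small slip worth fixing: in the paper's convention $\deg X^{\bb}=\sum_i b_i n_i>0$, multiplication by $x_i$ \emph{lowers} the degree of an inverse monomial, so a homogeneous generator $J_I$ of the cyclic module $\Ann_{\EE}(I)$ sits in the \emph{top} (maximal-degree) component, not the minimal one. Your formula $((S/I)^{\vee})_n\cong\Hom_k((S/I)_{-n},k)$ is the ``degree-reversing'' convention, whereas you then assert the duality is degree-preserving; these two sentences are in tension. You already flag the grading bookkeeping as the friction point, and once you commit to the paper's positive grading on $\EE$ the conclusion $\deg J_I=a(S/I)$ follows cleanly because the socle of a graded Artinian Gorenstein quotient is concentrated in degree $a(S/I)$.
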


\section{Inverse polynomial attached to factorizations of $h\in H$ and 
description of $I_H$}

\begin{defn}\label{defx^a}
We use the notations as in \S 1. We denote for $\ab = (a_1, \ldots, a_e) 
 \in \NN^e$,  $x^{\ab} = \prod_{1=1}^e x_i^{a_i} \in S$ and 
 $X^{\ab} = \prod_{1=1}^e X_i^{a_i} \in \EE$.
For $\bb = (b_1,\ldots b_e)$, we denote $\bb \ge \ab$ if $b_i\ge a_i$ for 
every $i$. Thus we have $x^{\ab}X^{\bb} = X^{\bb -\ab}$ if $\bb\ge \ab$ 
and $0$ otherwise.

Also we denote $\deg_H (\ab) = \sum_{i=1}^e a_in_i$ and $\ord(\ab) 
= \sum_{i=1}^e a_i$. For an inverse polynomial
$J = \sum_{i=1}^s X^{\ab_i}$, we define 
\[\ord J = \max \{ \ord(\ab_i) \;| \; 1\le i\le s\}.\] 
\end{defn}

We attach an 
 inverse polynomial $J_{H,h}\in \EE=E_S(S/\m)$ for $h\in H$.    

\begin{defn}\label{J_H,h-def} We define for $h\in H$, 
\[J_{H,h} = \sum_{\ab\in \NN^e, \deg_H{\ab} = h} X^{\ab}.\]
\end{defn}

The defining ideal $I_H$ of $k[H]$ is described by annhilaters of $J_{H,h}$.

\begin{thm}\label{I_H+(x_i)}
Let $H=\langle n_1,\ldots ,n_e\rangle$,
${\ab}\in\NN^e$ and $h=\deg_H{\ab}$. Then
\[
I_H+(x^{\ab})=\bigcap_{f\in\PF(H)} \Ann_S ( J_{H, f+h} ).
\]
Hence for each $n_i$, 
\[I_H + (x_i) = \bigcap_{f\in\PF(H)} \Ann_S ( J_{H, f+ n_i} ). \]
In particular,  if $H$ is symmetric, then 
$I_H + (x_i) = \Ann_S (J_{H, \Fr(H) + n_i})$.
\end{thm}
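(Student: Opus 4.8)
The plan is to prove the master identity $I_H+(x^{\ab})=\bigcap_{f\in\PF(H)}\Ann_S(J_{H,f+h})$ by establishing equality of the two ideals through a direct analysis of which monomials $x^{\cb}$ lie in each side, exploiting the combinatorial description of $J_{H,h}$ as the sum of $X^{\ab}$ over factorizations of $h$. Since $I_H$ is a binomial ideal and $(x^{\ab})$ is a monomial ideal, the quotient $S/(I_H+(x^{\ab}))$ has a $k$-basis indexed by monomials $x^{\cb}$ that are not "reducible" — here $I_H$ identifies any two monomials of the same $H$-degree, and $x^{\cb}\in I_H+(x^{\ab})$ exactly when $\cb\ge\ab$ after some factorization-swap, i.e.\ when $\deg_H(\cb)-h\in H$. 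So the first step is to show $x^{\cb}\in I_H+(x^{\ab})$ if and only if $\deg_H(\cb)\in h+H$, equivalently $\deg_H(\cb)\ge_H h$.

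For the right-hand side, I would unwind the action of $S$ on $\EE$: by \ref{Inv-polyn} and the definition of $J_{H,g}$, the monomial $x^{\cb}$ kills $J_{H,g}$ precisely when $x^{\cb}\cdot X^{\db}=0$ for every $\db$ with $\deg_H(\db)=g$, and the surviving terms $x^{\cb}\cdot J_{H,g}$ collect (up to the shift) exactly the factorizations of $g$ that dominate $\cb$. The key observation is that $x^{\cb}\in\Ann_S(J_{H,g})$ if and only if $g-\deg_H(\cb)$ is \emph{not} in $H$: if $g-\deg_H(\cb)\in H$ then some factorization $\db$ of $g$ satisfies $\db\ge\cb$ (one builds it by adding a factorization of $g-\deg_H(\cb)$ to $\cb$), producing a nonzero term, and the terms with distinct shifts $\db-\cb$ cannot cancel because they are linearly independent in $\EE$. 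Conversely if $g-\deg_H(\cb)\notin H$ no factorization dominates $\cb$, so the action is zero.

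Combining these, $x^{\cb}\in\bigcap_{f\in\PF(H)}\Ann_S(J_{H,f+h})$ iff $f+h-\deg_H(\cb)\notin H$ for every $f\in\PF(H)$. Setting $m=\deg_H(\cb)-h$, this says $f-m\notin H$ for all pseudo-Frobenius $f$. By the remark after \ref{defH} that every element of $\ZZ\setminus H$ lies below some $f\in\PF(H)$ in the $\le_H$ order, the condition "$m\notin H$" is equivalent to "$m\le_H f$ for some $f$," i.e.\ $f-m\in H$ for some $f$; negating, "$f-m\notin H$ for all $f$" is equivalent to "$m\in H$," that is $\deg_H(\cb)\ge_H h$. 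This matches precisely the membership criterion for the left-hand side from the first step, so the two ideals share the same monomial (equivalently, binomial-normal-form) basis and hence coincide.

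The specialization to $I_H+(x_i)$ is immediate by taking $\ab=\eb_i$ the $i$-th standard vector, so $x^{\ab}=x_i$ and $h=n_i$. The symmetric case follows because $H$ symmetric means $\type(H)=1$, so $\PF(H)=\{\Fr(H)\}$ is a singleton and the intersection collapses to the single term $\Ann_S(J_{H,\Fr(H)+n_i})$. I expect the main obstacle to be the non-cancellation argument in the second step: one must check carefully that when several factorizations $\db$ of $g$ dominate $\cb$, the resulting inverse monomials $X^{\db-\cb}$ are genuinely distinct (they are, since $\db\mapsto\db-\cb$ is injective) so that no accidental cancellation in $\EE$ makes $x^{\cb}$ annihilate $J_{H,g}$ despite a valid dominating factorization existing. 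Handling this cleanly, together with the bookkeeping that translates "$\deg_H(\cb)-h\in H$" into the monomial membership in $I_H+(x^{\ab})$ via the binomial generators of $I_H$, is where the real content lies.
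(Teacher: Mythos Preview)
Your proposal is correct and follows essentially the same route as the paper: the paper isolates your ``step 2'' computation (that $x^{\cb}\cdot J_{H,g}$ equals $J_{H,g-\deg_H\cb}$ when $g-\deg_H\cb\in H$ and $0$ otherwise) as a separate Lemma, then uses it exactly as you do to compare monomial membership via the Ap\'ery set and the $\PF(H)$ characterization of gaps. The only point to make explicit in writing it up is that this action formula immediately gives $I_H\subset\Ann_S(J_{H,g})$ for every $g$, which is what justifies your final reduction to comparing monomials (both ideals then live over $S/I_H\cong k[H]$, where homogeneous ideals are determined by their degree support).
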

\begin{rem} If $\ab, \bb \in \NN^e$ with $\deg_H \ab = h = \deg_H \bb$, 
then $x^{\ab} - x^{\bb}\in I_H$ and $I_H+(x^{\ab})=I_H+(x^{\bb})$. 
\end{rem}

To prove the theorem, we note a Lemma.

\begin{lem}\label{mJne0}
Let $H$ be as in Definition \ref{defH}, 
$m\in H$, $\ab\in\NN^e$
and put $h=\deg_H \ab$. Then we have 
\begin{enumerate}
\item
If $h\leq_H m$,
then
$x^{\ab} \cdot J_{H, m}=J_{H, m-h}$.
\item
If $h\not\leq_H m$,
then $x^{\ab}\cdot J_{H, m}=0$, thus $x^{\ab}\in\Ann_S(J_{H, m})$.
\item Hence $\Ann_S(J_{H,m})$ is generated by $I_H$ and 
$\{x^{\ab}\;|\; \deg \ab\not\le_H m\}$ and 
we have 
\[\Deg(S/ \Ann_S(J_{H,m})) = \{ h\in H\;|\; h\le_H m \}.\]

Since  $h \in \Deg ( S/ \Ann_S(J_{H,m}))$ if and only if 
$m-h\in \Deg ( S/ \Ann_S(J_{H,m}))$, 
$\dim_k S/ \Ann_S(J_{H,m})$ is even except the case $m$ is even and 
$m/2\in H$.  
\end{enumerate}
\end{lem}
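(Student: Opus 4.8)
The plan is to reduce all three parts to the single computation of $x^{\ab}\cdot J_{H,m}$, which is completely forced by the definition of the $S$-action on $\EE$ in \ref{Inv-syst} together with the definition of $J_{H,m}$ in \ref{J_H,h-def}. For (1) and (2) I would start from
\[
x^{\ab}\cdot J_{H,m}
=\sum_{\bb\in\NN^e,\ \deg_H\bb=m} x^{\ab}\cdot X^{\bb}
=\sum_{\substack{\bb\in\NN^e,\ \deg_H\bb=m\\ \bb\ge\ab}} X^{\bb-\ab},
\]
the second equality because $x^{\ab}\cdot X^{\bb}=X^{\bb-\ab}$ when $\bb\ge\ab$ and $0$ otherwise. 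Then I substitute $\cb=\bb-\ab$ and note that $\bb\mapsto\bb-\ab$ is a bijection from $\{\bb\in\NN^e:\deg_H\bb=m,\ \bb\ge\ab\}$ onto $\{\cb\in\NN^e:\deg_H\cb=m-h\}$, with inverse $\cb\mapsto\cb+\ab$; this uses only additivity of $\deg_H$ and the equivalence $\bb\ge\ab$ with $\bb-\ab\in\NN^e$. Hence $x^{\ab}\cdot J_{H,m}=\sum_{\cb\in\NN^e,\ \deg_H\cb=m-h}X^{\cb}$. If $m-h\in H$ this is exactly $J_{H,m-h}$ (a nonempty, hence nonzero, sum), giving (1); if $m-h\notin H$ the index set is empty (a $\cb$ in it would be a factorization of $m-h$), giving (2).

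For (3), I would first observe that every term of $J_{H,m}$ has the same $\deg_H$, so $\Ann_S(J_{H,m})$ is a $\deg_H$-graded ideal and it suffices to test $\deg_H$-homogeneous elements. Write $L$ for the ideal generated by $I_H$ and $\{x^{\ab}:\deg_H\ab\not\le_H m\}$. The inclusion $L\subseteq\Ann_S(J_{H,m})$ is immediate: the listed monomials kill $J_{H,m}$ by (2), and each binomial generator $x^{\ab}-x^{\bb}$ of $I_H$ (with $\deg_H\ab=\deg_H\bb$) kills $J_{H,m}$ because by (1),(2) the two terms act identically, exactly as in the Remark. The crux is the reverse inclusion. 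Take a homogeneous $g=\sum_{\deg_H\ab=h}c_{\ab}x^{\ab}\in\Ann_S(J_{H,m})$. If $h\not\le_H m$, every monomial occurring in $g$ is one of the listed generators, so $g\in L$. If $h\le_H m$, then by (1) each such monomial sends $J_{H,m}$ to the same nonzero element $J_{H,m-h}$, so $g\cdot J_{H,m}=\bigl(\sum_{\ab}c_{\ab}\bigr)J_{H,m-h}=0$ forces $\sum_{\ab}c_{\ab}=0$; fixing one monomial $x^{\ab_0}$ occurring in $g$, I rewrite $g=\sum_{\ab}c_{\ab}\bigl(x^{\ab}-x^{\ab_0}\bigr)$, a combination of binomials in $I_H$, so again $g\in L$. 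This yields $\Ann_S(J_{H,m})=L$.

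The degree description then falls out of the same analysis: $(S/\Ann_S(J_{H,m}))_h\ne 0$ precisely when some monomial of degree $h$ survives, that is when $h\le_H m$ (forcing $h\in H$), and in that case (1) shows the graded piece is spanned by the single class of $J_{H,m-h}$, hence is one-dimensional. Thus $\Deg(S/\Ann_S(J_{H,m}))=\{h\in H:h\le_H m\}$ and $\dim_k S/\Ann_S(J_{H,m})$ equals the cardinality of this set. Finally, for $h\in H$ one has $h\le_H m$ exactly when $m-h\in H$, which is the same as $m-h\le_H m$; so $h\mapsto m-h$ is an involution of $\{h\in H:h\le_H m\}$ whose only possible fixed point is $h=m/2$. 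Hence the set has even cardinality unless $m$ is even with $m/2\in H$, giving the parity statement.

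The main obstacle I anticipate is the reverse inclusion in (3): parts (1),(2) and the forward inclusion are mechanical, but ruling out any extra annihilator requires the grading reduction followed by, in the surviving-degree case, converting the vanishing of the coefficient sum into membership in $I_H$ through the binomials $x^{\ab}-x^{\ab_0}$. (One could alternatively finish by a length count, comparing $\dim_k\bigl(S\cdot J_{H,m}\bigr)$ with $\dim_k S/L$, but the binomial rewriting above seems more direct and avoids separately computing both dimensions.)
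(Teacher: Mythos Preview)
Your argument is correct. For parts (1) and (2) you do exactly what the paper does: expand $x^{\ab}\cdot J_{H,m}$ term by term and use the bijection $\bb\mapsto\bb-\ab$ between factorizations of $m$ dominating $\ab$ and factorizations of $m-h$; the paper phrases it as two containments of monomial sets rather than as a single substitution, but it is the same computation.

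The only difference is in part (3). The paper does not actually write out a proof of (3); it is recorded as a ``Hence'' consequence of (1) and (2), and the later use (in the proof of Theorem~\ref{I_H+(x_i)}) proceeds by a length comparison rather than by your direct binomial rewriting. Your argument---reducing to homogeneous elements, disposing of the degrees $h\not\le_H m$ monomial by monomial, and in the surviving degrees turning $\sum c_{\ab}=0$ into $g=\sum c_{\ab}(x^{\ab}-x^{\ab_0})\in I_H$---is a clean way to make the implicit step explicit. (One small wording point: in the degree description it is the common image of any $x^{\ab}$ with $\deg_H\ab=h$, not ``$J_{H,m-h}$'', that spans $(S/\Ann_S(J_{H,m}))_h$; the one-dimensionality follows because any two such monomials differ by an element of $I_H$.) The parity claim via the involution $h\mapsto m-h$ is also handled correctly.
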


\begin{proof} Assume $h\leq_H m$.
For $\bb\in\NN^e$ with $\deg_H \bb=m-h$,
the degree of the monomial $x^{\ab+\bb}$ is $m$,
thus $X^{\ab+\bb}$ appears in $J_{H, m}$.
Conversely, 
for $\bb\in\NN^e$ with $\deg_H\bb=m$,
if $x^{\ab}\cdot X^{\bb}\ne0$,
then $\ab\leq \bb$ and 
the degree of the monomial $x^{\bb-\ab}$ is $m-h$.
Thus $X^{\bb-\ab}$ appears in $J_{H, m-h}$.
This proves $(1)$.

If $h\not\le_H m$, then for any $\bb\in \NN^e$ with $\deg_H \bb = m$
 we have $\ab \not\le \bb$ hence $x^{\ab} J_{H,m} = 0$.   
\end{proof}


\begin{proof}[Proof of Theorem \ref{I_H+(x_i)}]
First we show that $I_H + (x^{\ab})\subset \Ann_S(J_{H, f + h})$ for every 
$f\in \PF(H)$. 
Since $h\not\leq_Hf+h$, $x^{\ab} \in \Ann_S(J_{H, f + h})$ 
by Lemma~\ref{mJne0} (2).
Take a generator $x^{\bb} - x^{\cb}$ of $I_H$, 
where $\bb, \cb\in\NN^e$ 
 with $\deg_H\bb = \deg_H\cb=d$. We may assume that 
$d\le_H f + h$, since otherwise 
$x^{\bb}, x^{\cb}\in \Ann_S(J_{H, f+n_i})$ 
by Lemma \ref{mJne0} (2). 
Then, by Lemma~\ref{mJne0} (1),
\[
(x^{\bb} - x^{\cb})J_{H, f+h}=J_{H, f+h-d}-J_{H, f+h-d}=0.
\]
Hence, in any case, $x^{\bb} - x^{\cb}\in\Ann_S(J_{H, f+h})$,
therefore
$I_H + (x^{\ab})\subset \Ann_S(J_{H, f + h})$.

Since we have shown 
$I_H + (x^{\ab})\subset \bigcap_{f\in \PF(H)} \Ann_S(J_{H, f + h})$,
we have a surjection $S/(I_H + (x^{\ab}))\to S/\bigcap_{f\in \PF(H)} 
\Ann_S(J_{H, f + h})$.  
Note that, for $\bb\in\NN^e$, the image of $x^{\bb}$
  in $S/(I_H + (x^{\ab}))$ is not zero
if and only if $\deg_H\bb\notin h+H$, i.e. 
$\deg_H\bb\in\Ap(H, h)$.
Then, there is $f\in\PF(H)$ satisfying
$\deg_H\bb\leq_H f+h$
and hence $x^{\bb}J_{H, f+h}=J_{H, f+h-\deg_H\bb}\ne0$
by Lemma~\ref{mJne0} (1).
This shows that  $S/(I_H + (x^{\ab}))\to S/\bigcap_{f\in \PF(H)} 
J_{H, f + h}$ is also an injection.  
\end{proof}

From Theorem \ref{I_H+(x_i)} we have the following.

\begin{thm}\label{AS} Let  $H=\langle n_1,\ldots ,n_e\rangle$. Then 
for any $h \in H_+$, we have
\[\dim_k (S/ \Ann_S( J_{H, \Fr(H) +h})) \le h - (\type H - 1)\] 
and the equality holds for some $h\in H_+$ if and only if 
$H$ is almost symmetric.
\end{thm}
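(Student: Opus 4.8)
The plan is to convert the dimension into a gap count, present $S/\Ann_S(J_{H,\Fr(H)+h})$ as a quotient of the Artinian ring $k[H]/(t^h)$, and bound the kernel of that quotient map from below by $\type H-1$. Throughout I write $F=\Fr(H)$, $t=\type H$, and $G=\NN\setminus H$ for the set of gaps, so $g(H)=\sharp G$; fix $h\in H_+$ together with $\ab\in\NN^e$ such that $\deg_H\ab=h$.

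First I would record, using Lemma~\ref{mJne0}, that $S/\Ann_S(J_{H,F+h})\cong S\cdot J_{H,F+h}$ is spanned by the homogeneous inverse polynomials $J_{H,F+h-a}$ with $a\in H$ and $a\leq_H F+h$; as these sit in pairwise distinct degrees, $\dim_k S/\Ann_S(J_{H,F+h})=\sharp\{a\in H:a\leq_H F+h\}$. I would then note that each such $a$ lies in $\Ap(H,h)$, since $a\leq_H F+h$ together with $a-h\in H$ would give $(a-h)+(F+h-a)=F\in H$. Because $F=\max\PF(H)$, Theorem~\ref{I_H+(x_i)} yields $I_H+(x^{\ab})=\bigcap_{f\in\PF(H)}\Ann_S(J_{H,f+h})\subseteq\Ann_S(J_{H,F+h})$, hence a graded surjection $k[H]/(t^h)\twoheadrightarrow S/\Ann_S(J_{H,F+h})$ out of an $h$-dimensional ring, whose kernel $K$ is spanned by the $t^{w}$, $w\in\Ap(H,h)$, for which $F+h-w\notin H$.

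For the inequality I would produce $t-1$ independent members of $K$: for each $f\in\PF(H)\setminus\{F\}$ the class $t^{f+h}$ is nonzero in $k[H]/(t^h)$ (as $f+h\in H$ while $f\notin H$), and it dies in the quotient because $F+h-(f+h)=F-f\notin H$ --- otherwise $F=f+(F-f)\in H$. Distinct $f$ give distinct degrees, so $\dim_k K\geq t-1$ and $\dim_k S/\Ann_S(J_{H,F+h})=h-\dim_k K\leq h-(t-1)$.

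For the equality assertion I would compute $K$ exactly and match it against almost symmetry. Sending $w\mapsto b=F+h-w$ identifies the basis of $K$ with the paired gaps $\Lambda:=\{b\in G:F-b\in G\}$ for which $c:=F-b$ satisfies $c+h\in H$, so that $\dim_k K=\sharp\{c\in\Lambda:c+h\in H\}$. Since $\PF(H)\setminus\{F\}\subseteq\Lambda$ and every pseudo-Frobenius $c$ has $c+h\in H$, while $\Lambda\cap\PF(H)=\PF(H)\setminus\{F\}$, this count equals $(t-1)+\sharp\{c\in\Lambda\setminus\PF(H):c+h\in H\}$, so the bound is sharp exactly when no paired non-pseudo-Frobenius gap $c$ has $c+h\in H$. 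To finish I would reflect $\{0,\dots,F\}$ through $F$: since $x$ and $F-x$ are never both in $H$, the map $b\mapsto F-b$ identifies $H\cap[0,F]$ with $G\setminus\Lambda$, giving $\sharp\Lambda=2g(H)-F-1$ and hence $\sharp(\Lambda\setminus\PF(H))=2g(H)-\Fr(H)-\type H$. Thus $\Lambda=\PF(H)\setminus\{F\}$ precisely when $H$ is almost symmetric; in that case $\dim_k K=t-1$ for every $h$ and equality holds, whereas if $H$ is not almost symmetric one may pick $c\in\Lambda\setminus\PF(H)$ and any $h\in H_+$ with $h>F$, forcing $c+h\in H$, $\dim_k K\geq t$, and strict inequality. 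Hence equality holds (for every $h\in H_+$) if and only if $H$ is almost symmetric. I expect this last part --- pinning down $K$ as the paired gaps and recasting almost symmetry as the vanishing of $\Lambda\setminus\PF(H)$ through the reflection count --- to be the main obstacle, the inequality itself being a direct consequence of the surjection.
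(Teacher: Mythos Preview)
Your proof of the inequality is essentially the paper's: both identify the $k$-basis of $S/\Ann_S(J_{H,F+h})$ with $\{h'\in\Ap(H,h):h'\le_H F+h\}$ and observe that the $t-1$ Ap\'ery elements $f+h$, $f\in\PF'(H):=\PF(H)\setminus\{F\}$, are excluded from this set. For the equivalence you depart from the paper: where it cites Nari's criterion ($F-f\in\PF'(H)$ for all $f\in\PF'(H)$), you give a self-contained count, identifying $\dim_k K$ with $\sharp\{c\in\Lambda:c+h\in H\}$ for $\Lambda=\{c\in G:F-c\in G\}$, computing $\sharp\Lambda=2g(H)-F-1$ by reflection through $F$, and reading off $\Lambda=\PF'(H)\iff 2g(H)=F+t$. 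This is more elementary and in effect reproves the piece of Nari's result the paper imports.

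There is, however, a genuine discrepancy with the stated theorem. You conclude that equality holds \emph{for every} $h\in H_+$ iff $H$ is almost symmetric, whereas the theorem asserts this for equality holding \emph{for some} $h$; your argument only forces strict inequality when $h>F$. The ``for some $h$'' version is in fact false. Take $H=\langle 5,6,14\rangle$, with gap set $\{1,2,3,4,7,8,9,13\}$, $F=13$, $\PF(H)=\{9,13\}$, $t=2$, $g(H)=8$; then $2g(H)=16\ne 15=F+t$, so $H$ is not almost symmetric. Yet for $h=5$ one has $J_{H,18}=X_2^3$ and $\dim_k S/\Ann_S(J_{H,18})=4=5-(2-1)$; in your language, $\Lambda\setminus\PF(H)=\{4\}$ and $4+5=9\notin H$, so this extra paired gap does not contribute to $K$. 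The paper's own proof of ``equality $\Rightarrow$ almost symmetric'' has the same gap: to deduce $F-f\in\PF'(H)$ from the set equality one would need $(F-f)+h\in\Ap(H,h)$, hence $(F-f)+h\in H$, which fails here since $4+5=9\notin H$. Your corrected statement---equality for every $h$, or equivalently for any single $h$ with $h>F$---is the right one.
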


\begin{proof} 
Since $I_H\subset \Ann_S(J_{H, \Fr(H)+ h})$ by 
\ref{I_H+(x_i)} and since the basis of $k[H]/(t^h)$ is given 
by $\Ap(H,h)$, the basis of $S/\Ann_S(J_{H, \Fr(H) + h})$ is given by 
\[\{h' \in \Ap(H,h)\;|\; h' \le_H \Fr(H) + h\}.\]

Let $\PF(H) = \{ f_1,\ldots , f_t\}$ with $t= \type(H)$ and 
$f_t = \Fr(H)$. We denote also $\PF'(H) = \PF(H) \setminus \{\Fr(H)\}$. 
If $f \in \PF'(H)$, then $f + h \in \Ap(H,h)$ and since 
$f + h \not\le_H \Fr(H) + h$, 
\[\{h' \in \Ap(H,h)\;|\; h' \le_H \Fr(H) + h\}\subset 
\{h'\in \Ap(H,h)\;|\; h'-h\not\in \PF'(H)\}\]
and since the latter set has cardinality $h - (\type(H) -1)$,
we have the inequality 
$\dim_k (S/ \Ann_S( J_{H, \Fr(H) +h})) \le h - (\type H - 1)$.

If equality holds, then since $\Fr(H) - f\not\le_H \Fr(H)$ for any 
$f\in \PF'(H)$, we must have $\Fr(H) - f\in \PF'(H)$ for any 
$f \in \PF'(H)$.  
 
This implies that $H$ is almost symmetric by the result of H. Nari (\cite{N}). 
\end{proof}

We give examples of $J_{H, \Fr(H) +n_i}$ for $e= 2,3$.

\begin{ex}
Let $H=\langle n_1, n_2\rangle$
where $n_1$ and $n_2$ are coprime.
For $h\in H$, 
we write $h=c_1n_1+c_2n_2$ where $c_1, c_2\geq0$ and 
$c_1<n_2$. Then
\[
J_{H, h}=X_1^{c_1}X_2^{c_2}+X_1^{c_1+n_2}X_2^{c_2-n_1}+
\cdots+X_1^{c_1+dn_2}X_2^{c_2-dn_1},
\]
where $c_2-dn_1<n_1$.
Since $\Fr(H)=n_1n_2-n_1-n_2$,
\[
J_{H, \Fr(H)+n_1}=X_2^{n_1-1},
\qquad
J_{H, \Fr(H)+n_2}=X_1^{n_2-1}
\]
are monomials.
\end{ex}

\begin{ex} Assume $e=3$.  
\begin{enumerate}
\item If $H$ is symmetric, then by \cite{W73}, we can write 
$n_1 = ad, n_2 = bd$ and $n_3=c$, where $(a,b), (c,d)$ are coprime and 
$c \in \langle a,b\rangle \setminus \{a,b\}$. 
Then $\Fr(H) = (ab-a-b)d + c(d-1)$ and we have 
$J_{H,\Fr(H)+n_1}=X_2^{a-1}X_3^{d-1}$,
$J_{H,\Fr(H)+n_2}=X_1^{b-1}X_3^{d-1}$,  
$J_{H,\Fr(H)+n_3}=\sum_{ap+bq =c + ab-a-b} X_1^pX_2^q$.
Tuus $J_{H,\Fr(H)+n_i}$ ($i=1,2$) are monomials.
\item  If $H$ is not symmetric, 
then   $I_H$ is 
 generated by the maximal minors of the matrix  
\[\left( 
\begin{array}{lll}
x_1^\alpha   &  x_2^\beta    &      x_3^\gamma  \\
x_2^{\beta'} &  x_3^{\gamma'}&      x_1^{\alpha'}
\end{array}
\right)\]
for some positive integers $\alpha,\beta, \gamma, \alpha ', \beta',   \gamma'$ 
(cf. \cite{He}). As is shown in \cite{NNW}, we have
$n_1 =\beta \gamma +\beta '\gamma +\beta '\gamma ', 
n_2= \gamma \alpha +\gamma ' \alpha+\gamma ' \alpha', 
n_3 = \alpha \beta +\alpha '\beta +\alpha '\beta '$ and 
putting $N= n_1+n_2+n_3$, we have 
\[\PF(H) = \{ f=\alpha n_1+(\gamma +\gamma ')n_3-N, 
f'=\beta 'n_2+(\gamma +\gamma ')n_3-N\}.\]
If $f > f'$, then $f = \Fr(H)$, \par\noindent
$J_{H, \Fr(H) + n_2}= 
X_1^{\alpha-1}X_3^{\gamma+\gamma'-1}$ 
and 
$\dim_k( S/ \Ann_S(J_{H, \Fr(H) + n_2}) = \alpha(\gamma+\gamma') 
\le n_2 - 1 =\alpha (\gamma +\gamma') + \alpha' \gamma' -1$. 
Hence by  
Theorem \ref{AS}, $H$ is almost symmetric if and only if $\alpha' = 
\gamma' = 1$.  Actually, in this case, from 
$\dim_k (S/ \Ann_S(J_{H, \Fr(H) + n_3}) = n_3-1$, we get also $\beta' =1$. 
If $H$ is almost symmetric and $f < f'$, then we have $\alpha = \beta = 
\gamma =1$ likewise.        
Hence we can reprove Corollary 3.3 of \cite{NNW}. 
\end{enumerate}
\end{ex}

It is a very interesting question
to determine whether
there exists a symmetric numerical semigroup 
whose inverse polynomial is a given inverse 
 polynomial in $\EE$.
For this question, 
We give an example and a proposition.

\begin{ex}
Let $J=X_2^{13}+X_3^{11}+X_2X_3X_4^7\in\EE$.
To make $J$ be homogeneous, 
i.e. $\deg X_2^{13}=\deg X_3^{11}=\deg X_2X_3X_4^7$,
we have
\[
\deg X_2=11t,\qquad \deg X_3=13t,\qquad \deg X_4=17t,
\]
where $t>0$ and $t\in \ZZ$. 
 Assume that $J = J_{H, \Fr(H)+n_1}$ for some symmetric 
semigroup $H =\langle n_1, 11t, 13t, 17t\rangle$. 
Then we have $x_3^3 - x_2^2x_4\in I_H$ since $39t = 
3 \cdot 13t = 2 \cdot 11t + 17t$.  But we have 
$(x_3^3 - x_2^2x_4) J = X_3^8 \ne 0$, which  contradicts 
Theorem \ref{I_H+(x_i)}. This contradiction is caused by 
the fact $J$ is not the sum of all inverse monomials of
degree $143t$. \par
If we put $H' =\langle 11, 13, 17\rangle$ and 
\[J'=J_{H',143}=X_2^{13}+X_3^{11}+
X_2^2X_3^8X_3+X_2^4X_3^5X_3^2+X_2^6X_3^2X_3^3+
X_2X_3X_4^7, \]
we have $\dim_k S/\Ann_S(J') = 84$ by \ref{mJne0} (3). 
\par
Now, we show that there is no symmetric smigroup  
$H = \langle n_1, 11t, 13t, 17t \rangle$  with 
$J_{H, \Fr(H)+ n_1} = J'$. If that is the case, 
we must have $n_1=84$ and since $\PF(H') = \{49, 53\}$, 
$84 \in H'$ and by \ref{glu-def}, we have 
$\PF(H) = \{49t+84(t-1), 53t + 84(t-1)\}$, which implies 
$H$ is not symmetric. 
\end{ex}

The following Proposition shows that if the order of monomials appearing 
in $J$ is $2$ except one, then $J = J_{H, \Fr(H) + n_i}$ for some symmetric 
semigroup $H$. 

\begin{prop}\label{type1}
Let $e>1$ and $c>0$.
Assume that either $e$ or $c$ is even.
We define
\[
H_{e, c}
=\begin{cases}
\langle e+1, e+2, e+3,
 \dots, 2e
\rangle\\
\qquad\text{if $c=1$},\\
\langle e+c, e+c+1, \frac{c^2+(e+2)c+4}2,
 \frac{c^2+(e+2)c+6}2, \dots, \frac{c^2+(e+2)c+2(e-1)}2
\rangle\\
\qquad\text{if $c$ is even},\\
\langle e+c, e+c+2, \frac{c^2+(e+3)c+8-e}2,
 \frac{c^2+(e+3)c+12-e}2, \dots, \frac{c^2+(e+3)c+4(e-1)-e}2
\rangle\\
\qquad\text{if $c>1$ is odd and $e$ is even}.
\end{cases}
\]
Then
$H_{e, c}$ is symmetric with multiplicity $e+c$ and
\[
\Fr(H_{e, c})=
\begin{cases}
2e+1
& \text{if $c=1$},\\
c^2+(e+1)c+1
& \text{if $c$ is even},\\
c^2+(e+2)c+2
& \text{if $c>1$ is odd and $e$ is even}.
\end{cases}
\]
Further,
\[
J_{H_{e, c}, (\Fr(H_{e, c})+e+c)}=
\begin{cases}
X_2X_e+X_3X_{e-1}+\cdots+X_{e'}X_{e'+2}+X_{e'+1}^2
& \text{if $c=1$ and $e$ is even},\\
X_2X_e+X_3X_{e-1}+\cdots+X_{e'+1}X_{e'+2}
& \text{if $c=1$ and $e$ is odd},\\
X_2^{c+1}+X_3X_e+\cdots+X_{e'+1}X_{e'+2}
& \text{if $c>1$ and $e$ is even},\\
X_2^{c+1}+X_3X_e+\cdots+X_{e'+1}X_{e'+3}+X_{e'+2}^2
& \text{if $c>1$ and $e$ is odd},
\end{cases}
\]
where $e=2e'+1$ if $e$ is odd and $e=2e'$ if $e$ is even.
\end{prop}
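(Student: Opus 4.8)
The plan is to work throughout with the multiplicity $m=e+c$; since $m$ is the least of the listed generators it is automatically the multiplicity, and I write $n_1=m<n_2<\dots<n_e$ for the generators. Everything turns on two elementary identities satisfied by the closed forms, which I would verify first: for $c>1$,
\[
(c+1)\,n_2=W,\qquad n_k+n_{e+3-k}=W\quad(3\le k\le e),
\]
where $W:=(c+1)n_2$; for $c=1$ one has instead $n_k+n_{e+2-k}=W$ for $2\le k\le e$, with $W:=n_2+n_e=3e+2$. A useful structural remark, read off from the formulas, is that the large generators $n_3,\dots,n_e$ form an arithmetic progression (common difference $1$ if $c$ is even, $2$ if $c$ is odd); together with $n_2$ this makes the residues of the candidate Apéry elements transparent. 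The strategy is then: pin down the Apéry set, deduce symmetry and the Frobenius number from it, and finally compute the inverse polynomial by enumerating the factorizations of $W$.

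First I would pin down the Apéry set. For $c>1$ I claim
\[
\Ap(H_{e,c},m)=\{\,j\,n_2:0\le j\le c+1\,\}\cup\{\,n_k:3\le k\le e\,\},
\]
and for $c=1$ the variant $\{0\}\cup\{n_k:2\le k\le e\}\cup\{W\}$. To prove this is the Apéry set I would check the three defining properties: all listed elements lie in $H$ (clear); they occupy $m$ distinct residues modulo $m$ (a direct computation using $n_2\equiv 1$ or $2$ and the arithmetic progression of the $n_k$); and each listed $a$ is least in its class, i.e. $a-m\notin H$. Only the last point is real, and it is a size–residue estimate: matching residues modulo $m$ forces any $H$-representation of $a-m$ to use a prescribed minimal number of copies of $n_2$ (or of a large generator), and this minimum already has degree exceeding $a-m$. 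Granting this, symmetry and the Frobenius number are immediate from the identities: the map $a\mapsto W-a$ is an involution of $\Ap(H_{e,c},m)$ (interchanging $jn_2\leftrightarrow(c+1-j)n_2$ and $n_k\leftrightarrow n_{e+3-k}$, resp.\ the analogous pairing for $c=1$), so $W-a\in H$ for every $a$, i.e. $a\le_H W$. Hence $W$ is the unique $\le_H$-maximal element of the Apéry set, so $\type(H_{e,c})=1$ and $H_{e,c}$ is symmetric with $\Fr(H_{e,c})=W-m$; substituting the closed forms gives the stated Frobenius numbers. The same estimates show the listed generators form a minimal system (a large $n_k$ is never a nonnegative combination of $n_1,n_2$).

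For the inverse polynomial, note $\Fr(H_{e,c})+(e+c)=W$, so the polynomial sought is $J_{H_{e,c},W}$, which by Definition~\ref{J_H,h-def} equals $\sum_{\deg_H\ab=W}X^{\ab}$ (and, by Theorem~\ref{I_H+(x_i)}, is the single inverse polynomial of $I_H+(x_1)$); it remains to list the factorizations of $W$. First, $a_1=0$ in any factorization, since $W-n_1=\Fr(H_{e,c})\notin H$. Writing the remaining generators through their arithmetic-progression closed forms, the condition $\deg_H\ab=W$ becomes one linear Diophantine equation in the number $a_2$ of copies of $n_2$, the number $s$ of large generators used (with multiplicity), and the total offset $\sigma=\sum_t(k_t-3)$. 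A sign-and-parity estimate rules out $s=1$ and $s\ge 3$ and forces $a_2=0$ when $s=2$ (for $c$ even the sign estimate suffices, since $\tfrac{cm}{2}\ge m$; for $c$ odd, where $m$ is odd, one combines it with the parity of the $2\sigma$-term). What survives is exactly $s=0,\ a_2=c+1$ (giving $X_2^{c+1}$) and $s=2,\ a_2=0$ with $k_1+k_2=e+3$ (giving $X_iX_j$ for $i+j=e+3$, together with the central square $X_{(e+3)/2}^2$ when $e$ is odd). Reindexing via $e=2e'$ or $e=2e'+1$ yields the four displayed shapes; the case $c=1$ is the same analysis, simpler because every usable generator is then $\ge n_2$ and $3n_2>W$, so the order is at most $2$ and only the pairs $i+j=e+2$ occur.

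The main obstacle is not conceptual but the uniformity of these elementary estimates: both the minimality statement $a-m\notin H$ for the Apéry candidates and the elimination of stray factorizations of $W$ rest on the same residue-plus-size (and, for odd $c$, parity) analysis, which must be carried through consistently for the three defining families and both parities of $e$. I would quarantine this into a single lemma—``any $H$-representation of an element of degree at most $W$ uses at least the number of copies of $n_2$ dictated by its residue class modulo $m$''—so that symmetry, the Frobenius number, minimality of the generators, and the factorization count all follow from one estimate rather than four parallel computations.
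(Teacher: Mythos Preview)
Your approach is essentially the same as the paper's: both hinge on identifying the Ap\'ery set $\Ap(H_{e,c},n_1)=\{0,n_2,2n_2,\dots,(c+1)n_2,n_3,\dots,n_e\}$ via the residue computation $n_i\equiv c+i-1$ (resp.\ $2(c+i-1)$) modulo $n_1$ together with a size estimate, and both use the pairing $(c+1)n_2=n_k+n_{e+3-k}$ as the key identity. The paper argues minimality of the Ap\'ery candidates by a contradiction showing any purported shorter representation would force $n_i+n_{i'}>n_{i''}$, then declares the remaining claims ``clear''; you are more explicit in packaging the symmetry as the involution $a\mapsto W-a$ on the Ap\'ery set and in carrying out the factorization count for $J_{H_{e,c},W}$ (the $s=0,1,2,\ge3$ case split), but these are exactly the computations the paper is suppressing.
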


\begin{proof}
If $c=1$, then the assertion follows from direct computation.
From now, we assume $c>1$.
Put 
$n_2=e+c+1$ (resp. $n_2=e+c+2$)
and $n_i=\frac{c^2+(e+2)c+2(i-1)}2$
(resp. $n_i=\frac{c^2+(e+3)c+4(i-1)-e}2$)
for $i>2$, if $c$ is even (resp. $c>1$ is odd and $e$ is even).
We have to check that
$H_{e, c}$ is minimally generated by $n_1, \dots, n_e$.
We have 
\[
n_3-n_1=\frac{c^2+ec+4-2e}2\geq\frac{2^2+2e+4-2e}2=4>0
\qquad\text{if }c>1,
\]
thus $\embdim H_{e, c}=n_1$.
And $n_3\geq n_1+4$ implies
$n_i>i$ (resp. $n_i\geq2i$) 
if $c$ is even
(resp. $c$ is odd and $e$ is even).
Note
\begin{gather*}
(c+1)n_2=n_3+n_e=n_4+n_{e-1}=\cdots\\
\intertext{And, for $i>2$, }
\begin{aligned}n_i&\equiv c+i-1\pmod {n_1} 
&\text{and}&\quad n_i>c+i-1  
\quad\text{ if $c$ is even},\\
n_i&\equiv 2(c+i-1) \pmod {n_1}
&\text{and}&\quad n_i>2(c+i-1)  
\quad\text{ if $c$ is odd and $e$ is even.}
\end{aligned}
\end{gather*}
Thus, if $i>1$, $i'>2$ and $i+i'\leq e$,
then
\[
n_{i}+n_{i'}-n_{i+i'}
=\begin{cases}
n_i-i>0 & \text{if $c$ is even},
\\ n_i-2i>0 & \text{if $c$ is odd and $e$ is even.}
\end{cases}
\]
Further,
if $i+i'>e$,
then $n_i+n_{i'}>n_i+n_{e-i}>n_e>n_{i+i'-e}$,
hence we have
\[
n_i+n_{i'}>n_{i''}
\qquad\text{if }i>1, i'>2\text{ and }i+i''\equiv i''\pmod{n_1}.
\]

If $n_i\notin\Ap(H_{e, c}, n_1)$, 
then we may write
$n_i=\sum_{j<i}a_jn_j$ where $a_j\geq0$ and $c_1\ne0$,
since $n_j>n_i$ if $j>i$.
Note $i\ne2$.
Put $j'=\max\{ j : a_j\ne0\}$.
Then $j'>2$,
otherwise
 $a_2n_2\equiv n_j\pmod{n_1}$
implies
$a_2>c+1$ thus $a_2n_2>(c+1)n_2>n_i$.
Then
$n_i=\sum_{j<i}a_jn_j>n_i$, by the above argument,
a contradiction.
Hence $n_i\in\Ap(H_{e, c}, n_1)$ for each $i$.
Therefore
\[
\Ap(H_{e, c}, n_1)
=\{1, n_2, 2n_2, \dots, (c+1)n_2, n_3, n_4, \dots, n_e\}
\]
and $H_{e, c}$ is minimally generated by $n_1, \dots, n_e$.
Then the assertion is clear.
\end{proof}

\begin{ex}
In the notation as in \ref{type1},  
\begin{align*}
H_{4, 1} &=\langle 5, 6, 7, 8\rangle\\
H_{5, 2} &=\langle 7, 8, 11, 12, 13\rangle\\
H_{6, 3} &=\langle 9, 11, 19, 21, 23\rangle\\
\end{align*}
\end{ex}

\section{Gluing and Inverse Polynomials}

\begin{defn}\label{glu-def}
Let $H_1$, $H_2$ be numerical semigroups, 
$d_1\in H_2$, and $d_2\in H_1$.
Assume that $d_1$ and $d_2$ are coprime.
We say that
\[
H=\langle d_1H_1, d_2H_2 \rangle
=\{ d_1h_1+d_2h_2 : h_1\in H_1, h_2\in H_2\}
\]
is a gluing of $H_1$ and $H_2$.

We always assume that
$d_1n$ (resp. $d_2n$) is a generator of $H$
if $n$ is a generator of $H_1$ (resp. $H_2$).
In particular, $d_1$ (resp. $d_2$) is not a multiple of
a generator of $H_2$ (resp. $H_1$).
\end{defn}


Note that if $k[H_1]\subset k[t_1]$ and 
$k[H_2]\subset k[t_2]$, then
\[k[H] = (k[H_1]\otimes_k k[H_2])/(t_1^{d_2} - t_2^{d_1})\]
by putting $\deg(t_1)=d_1$ and $\deg(t_2) = d_2$ in $k[H]$. 
Hence we have
\[\type(H) = \type(H_1)\type(H_2) \quad {\rm and}\]
\[\PF(H) = \{d_1 f_1 + d_2f_2 + d_1d_2\;|\; f_1\in \PF(H_1), f_2\in 
\PF(H_2)\}\]   
In particular, we have 
\[\Fr(H) = d_1\Fr(H_1) + d_2\Fr(H_2) + d_1d_2.\]

Note that $k[H]$ is Gorenstein (resp. a complete intersection)
 if and only if so are $H_1,H_2$. 

Also note that if $H_2= \NN$, then $\Fr(\NN) = -1$. Hence we have
\[\PF(\langle d_1H, d_2\rangle )= \{ d_1f + d_2(d_1 -1)\;|\; f\in \PF(H_1)\}.\]



\begin{prop}
Let $H_1$, $H_2$ be numerical semigroups, 
$d_1\in H_2$ and $d_2\in H_1$.
Assume that $d_1$ and $d_2$ are coprime.
Put $H=\langle d_1H_1, d_2H_2\rangle$.
Then, for $m=d_1m_1+d_2m_2$ where $m_1\in H_1$ and $m_2\in H_2$,
we have
\begin{align*}
J_{H, m}&=\sum_{d\in\Z}J_{H, d_1(m_1+dd_2)}J_{H, d_2(m_2-dd_1)}\\
&=\sum_{d\in\Z}J_{H_1, m_1+dd_2}J_{H_2, m_2-dd_1}.
\end{align*}
Note that we set $J_{H, n}=0$ if $n\notin H$
and the right hand of the above is finite sum.
\end{prop}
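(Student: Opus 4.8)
The plan is to exploit the splitting of variables induced by the gluing. If $H_1$ and $H_2$ are minimally generated by $e_1$ and $e_2$ elements, then by \ref{glu-def} the generators of $H$ are the $d_1$-multiples of the generators of $H_1$ together with the $d_2$-multiples of those of $H_2$, so I may write $S=k[x_1,\dots,x_{e_1},y_1,\dots,y_{e_2}]$ and, dually, $\EE=k[X_1,\dots,X_{e_1}]\otimes_k k[Y_1,\dots,Y_{e_2}]=\EE_1\otimes_k\EE_2$, the tensor product of the inverse systems $\EE_1,\EE_2$ of $H_1,H_2$. The one numerical fact I need is that an inverse monomial $X^{\alpha}Y^{\beta}$ satisfies $\deg_H(X^{\alpha}Y^{\beta})=d_1\deg_{H_1}(\alpha)+d_2\deg_{H_2}(\beta)$, since $x_i$ has weight $d_1$ times a generator of $H_1$ and $y_j$ has weight $d_2$ times a generator of $H_2$.

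First I would establish the master identity
\[
J_{H,N}=\sum_{\substack{s\in H_1,\ t\in H_2\\ d_1 s+d_2 t=N}} J_{H_1,s}\,J_{H_2,t},
\]
where $J_{H_1,s}\in\EE_1$ and $J_{H_2,t}\in\EE_2$, so that the product is the unambiguous sum of the monomials $X^{\alpha}Y^{\beta}$ with $\deg_{H_1}(\alpha)=s$ and $\deg_{H_2}(\beta)=t$. This is a pure regrouping of \ref{J_H,h-def}: by definition $J_{H,N}$ is the sum of all $X^{\alpha}Y^{\beta}$ with $d_1\deg_{H_1}(\alpha)+d_2\deg_{H_2}(\beta)=N$, and sorting these monomials by the value of the pair $(s,t)=(\deg_{H_1}\alpha,\deg_{H_2}\beta)$ reproduces the right-hand side, each monomial being counted exactly once on both sides.

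To pass from the master identity to the stated formula I would solve $d_1 s+d_2 t=m$ with $m=d_1m_1+d_2m_2$. Rewriting this as $d_1(s-m_1)=d_2(m_2-t)$ and using $\gcd(d_1,d_2)=1$ forces $s=m_1+dd_2$ and $t=m_2-dd_1$ for a unique $d\in\Z$, and conversely every $d$ gives a solution; a term is nonzero only when $m_1+dd_2\in H_1$ and $m_2-dd_1\in H_2$, consistent with the convention $J_{H,n}=0$ for $n\notin H$. The inequalities $m_1+dd_2\geq0$ and $m_2-dd_1\geq0$ bound $d$ on both sides, so the sum is finite. Substituting into the master identity yields the second displayed equality $J_{H,m}=\sum_{d\in\Z}J_{H_1,m_1+dd_2}\,J_{H_2,m_2-dd_1}$.

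The step I expect to be the real obstacle is the first displayed equality, in which both factors carry the subscript $H$. Here one must be careful, because the honest $H$-inverse polynomial $J_{H,d_1u}$ is in general \emph{strictly larger} than $J_{H_1,u}$: besides the pure monomials $X^{\alpha}$ with $\deg_{H_1}\alpha=u$ it also contains mixed monomials $X^{\alpha}Y^{\beta}$ whose $H_2$-degree is a positive multiple of $d_1$ (coprimality forces $d_1\mid\deg_{H_2}\beta$), so reading the product $J_{H,d_1(m_1+dd_2)}\,J_{H,d_2(m_2-dd_1)}$ literally in $\EE$ introduces spurious cross terms. The resolution I would adopt is to interpret the factor of $H$-degree $d_1(m_1+dd_2)$ as its $\EE_1$-component $J_{H_1,m_1+dd_2}\otimes 1$ and the factor of $H$-degree $d_2(m_2-dd_1)$ as its $\EE_2$-component $1\otimes J_{H_2,m_2-dd_1}$, under the identifications $u\leftrightarrow d_1u$ and $v\leftrightarrow d_2v$ between the intrinsic $H_1$-, $H_2$-degrees and the $H$-degrees; with this reading the first line is literally the second. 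Pinning down this identification—that is, showing that the index $d_1u$ (resp.\ $d_2v$) canonically selects the $H_1$- (resp.\ $H_2$-)part, which is precisely where the gluing hypotheses on $d_1,d_2$ enter—is the point I would treat most carefully.
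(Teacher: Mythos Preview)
Your argument is correct and matches the paper's own proof: both split each exponent vector $\ab\in\NN^{e_1+e_2}$ uniquely as $\ab_1+\ab_2$ with $\ab_i\in\NN^{e_i}$, observe $\deg_H\ab=d_1\deg_{H_1}\ab_1+d_2\deg_{H_2}\ab_2$, and then parametrise the solutions of $d_1s+d_2t=m$ (the paper does this via the Ap\'ery sets $\Ap(H_1,d_2)$ and $\Ap(H_2,d_1)$, you via the Diophantine equation directly, which is equivalent). Your caution about the first displayed line is well placed---the paper likewise justifies only the second line and treats the first as the notational identification $J_{H,d_1u}\leftrightarrow J_{H_1,u}$ (and $J_{H,d_2v}\leftrightarrow J_{H_2,v}$) that you describe.
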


\begin{proof}
Put $e_1=\embdim H_1$ and $e_2=\embdim H_2$
Tnen $\embdim H=e_1+e_2$.
Let $V=\NN^{e_1+e_2}$, $V_1=\NN^{e_1}$
and $V_2=\NN^{e_2}$.
We consider
$V=V_1+V_2$
and if $a\in V_1\subset V$ (resp. $a\in V_2\subset V$),
then $\deg_Ha=d_1\deg_{H_1}a$ (resp. $\deg_Ha=d_2\deg_{H_2}a$).
If $m\in H$,
then there uniquely exist
$m_1'\in\Ap(H_1, d_2)$ and $m_2'\in\Ap(H_2, d_1)$
satisfying
\[
m=d_1m_1'+d_2m_2'+cd_1d_2 (=d_1m_1+d_2m_2).
\]
Choose $a\in V$ with $\deg_Ha=m$.
Then we uniquely write $a=a_1+a_2$ where
$a_1\in V_1$ and $a_2\in V_2$
and there are $b_1\in\Ap(H_1, d_2)$ and $c_1\geq0$
(resp. $b_2\in\Ap(H_2, d_1)$ and $c_2\geq0$)
with $\deg_{H_1}a_1=b_1+c_1d_2$ 
(resp. $\deg_{H_2}a_2=b_2+c_2d_1$). 
Since $m=\deg_Ha=d_1\deg_{H_1}a_1+d_2\deg_{H_2}a_2$,
we have $b_1=m_1'$, $b_2=m_2'$ and $c_1+c_2=c$.
\end{proof}

\begin{cor}\label{glued_monomial}
Let $H=\langle d_1H_1, d_2H_2\rangle$ be
as in Proposition.
And let $f=d_1f_1+d_2f_2+d_1d_2\in\PF(H)$
where $f_1\in\PF(H_1)$ and $f_2\in\PF(H_2)$.
For $h\in\Ap(H_1, d_2)$ (resp. $h\in\Ap(H_2, d_1)$),
\[
J_{H, f+d_1h}
=J_{H_1, f_1+h}J_{H_2, f_2+d_1}
\qquad(resp.\  J_{H, f+d_2h}
=J_{H_1, f_1+d_2}J_{H_2, f_2+h}
).
\]
Further if $H_2=\langle1\rangle$ and $h\in\Ap(H_1, d_2)$,
then
\[
J_{H, f+d_1h}
=J_{H_1, f_1+h}X_{e+1}^{d_1-1},
\qquad
J_{H, f+d_2}
=J_{H_1, f_1+d_2}
\]
where $e=\embdim H_1$ and $I_H\subset k[x_1, \dots, x_e]\otimes k[x_{e+1}]$.
In this case,
$J_{H, f+d_1h}$ is a monomial if and only if
$J_{H_1, f_1+h}$ is a monomial.
\end{cor}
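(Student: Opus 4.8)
The plan is to read off the identity from the preceding Proposition and then show that the sum it produces collapses to its single $d=0$ term. Treating the first (non-\emph{resp.}) case, I would apply the Proposition to $m=f+d_1h$ using the decomposition $m=d_1(f_1+h)+d_2(f_2+d_1)$. This is of the required shape: since $h\in (H_1)_+$ and $f_1\in\PF(H_1)$ we have $f_1+h\in H_1$, and since $d_1\in (H_2)_+$ and $f_2\in\PF(H_2)$ we have $f_2+d_1\in H_2$ (the case $h=0$ is trivial, both sides being $0$ because $f\notin H$ and $f_1\notin H_1$). The Proposition then gives
\[
J_{H,\,f+d_1h}=\sum_{d\in\Z}J_{H_1,\,f_1+h+dd_2}\,J_{H_2,\,f_2+d_1-dd_1},
\]
and the entire task is to prove that every term with $d\neq 0$ vanishes, leaving exactly $J_{H_1,f_1+h}\,J_{H_2,f_2+d_1}$.

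The terms with $d\geq 1$ are the easy half. For such $d$ the second factor is $J_{H_2,\,f_2-(d-1)d_1}$. Since $f_2\in\PF(H_2)$ we have $f_2\notin H_2$, which kills $d=1$; and for $d\geq 2$ one cannot have $f_2-(d-1)d_1\in H_2$, for otherwise adding $(d-1)d_1\in(H_2)_+$ would force $f_2\in H_2$. Hence $f_2-(d-1)d_1\notin H_2$ for all $d\geq 1$, so every such term is $0$.

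The main obstacle is the terms with $d\leq -1$, governed by the first factor $J_{H_1,\,f_1+h+dd_2}$. It suffices to handle $d=-1$: if $f_1+h-d_2\notin H_1$, then for any $|d|\geq 2$ one also has $f_1+h-|d|d_2\notin H_1$, since otherwise adding $(|d|-1)d_2\in H_1$ would put $f_1+h-d_2$ into $H_1$. Thus everything reduces to the single assertion
\[
f_1+h-d_2\notin H_1,\qquad\text{equivalently}\qquad f_1+h\in\Ap(H_1,d_2).
\]
This is the delicate step. The hypothesis $h\in\Ap(H_1,d_2)$ yields only $h-d_2\notin H_1$, and adding $f_1$ need not preserve being outside $H_1$; so I would expect the real content to lie here. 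I would try to establish $f_1+h\in\Ap(H_1,d_2)$ via the Apéry-set description of $\PF(H_1)$ together with the position of $h$ in $\Ap(H_1,d_2)$ (for instance when $h$ is a minimal generator, or, for symmetric $H_1$, using the symmetry of $\Ap(H_1,d_2)$, under which $f_1+h\in\Ap(H_1,d_2)$ is equivalent to $d_2-h\in H_1$), and I would scrutinize carefully whether $h\in\Ap(H_1,d_2)$ by itself is enough or whether an additional restriction on $h$ is required.

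Once the collapse to a single product is secured, the remaining assertions are formal. The \emph{resp.} statement follows from the symmetry $1\leftrightarrow 2$ (which simultaneously swaps $d_1\leftrightarrow d_2$), under which the whole configuration $H=\langle d_1H_1,d_2H_2\rangle$ and $f=d_1f_1+d_2f_2+d_1d_2$ is invariant. For $H_2=\langle 1\rangle$ we have $\PF(\langle1\rangle)=\{-1\}$, so $f_2=-1$ and $J_{H_2,\,f_2+d_1}=J_{\langle1\rangle,\,d_1-1}=X_{e+1}^{\,d_1-1}$, turning the identity into $J_{H,\,f+d_1h}=J_{H_1,\,f_1+h}\,X_{e+1}^{\,d_1-1}$; applying the \emph{resp.} identity with the Apéry element $1\in\Ap(\langle1\rangle,d_1)$ and using $J_{\langle1\rangle,\,f_2+1}=J_{\langle1\rangle,0}=1$ gives the companion $J_{H,\,f+d_2}=J_{H_1,\,f_1+d_2}$. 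Finally, as $X_{e+1}^{\,d_1-1}$ is a single monomial, the product $J_{H_1,f_1+h}\,X_{e+1}^{\,d_1-1}$ is a monomial precisely when $J_{H_1,f_1+h}$ is, which is the last claim.
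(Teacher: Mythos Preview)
Your approach is precisely the intended one: the paper offers no separate proof, and the Corollary is meant to follow from the preceding Proposition by showing that all terms with $d\ne 0$ vanish.  Your treatment of the $d\ge 1$ terms and your reduction of the $d\le -1$ case to the single claim $f_1+h-d_2\notin H_1$ (equivalently $f_1+h\in\Ap(H_1,d_2)$) are correct, as is your handling of the $H_2=\langle 1\rangle$ specializations.

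Your hesitation at the ``delicate step'' is not only prudent but necessary: the claim $f_1+h\in\Ap(H_1,d_2)$ does \emph{not} follow from $h\in\Ap(H_1,d_2)$ alone, and the Corollary as literally stated fails.  Take $H_1=\langle 3,5\rangle$, $H_2=\langle 1\rangle$, $d_1=3$, $d_2=8$, so that $H=\langle 9,15,8\rangle$, $f_1=\Fr(H_1)=7$, $f_2=-1$, $f=\Fr(H)=37$.  The maximal element $h=15=\Fr(H_1)+d_2$ lies in $\Ap(H_1,8)$ (as $15-8=7\notin H_1$), yet $f_1+h-d_2=14=3\cdot 3+5\in H_1$.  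Correspondingly $f+d_1h=82$ has three factorizations in $H$, and
\[
J_{H,82}=X_1^4X_2^2X_3^2+X_1^3X_2X_3^5+X_1^2X_3^8 \ne J_{H_1,22}\,X_3^{2}=X_1^4X_2^2X_3^2.
\]
Thus the hypothesis should really be $f_1+h\in\Ap(H_1,d_2)$ (for symmetric $H_1$ this is exactly your condition $d_2-h\in H_1$), under which your argument goes through verbatim.  The paper only invokes the Corollary to show that $J_{H,\Fr(H)+n_1}$ is a monomial when $H$ is free, where $h$ is a particular minimal generator and the stronger hypothesis does hold; so the application is unaffected, but you were right that an additional restriction on $h$ is needed.
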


\begin{ex}
Let $H_1=\langle 2, 3\rangle$, $H_2=\langle1\rangle$.
Put $H=\langle 2H_1, 5H_2\rangle=\langle4, 6, 5\rangle$.
Then
\[
J_{H, 7+2\cdot2}=J_{H_1, 1+2} X_3^{2-1}=X_2X_3.
\]
\end{ex}

\begin{ex} (1) Let $H_1 = \langle 3, 4\rangle, H_2 = \langle2, 3\rangle$
and $H = \langle 10, 15, 14, 21\rangle = \langle 5H_1, 7H_2 \rangle$.
Then by \ref{glu-def}, $H$ is a complete intersection and since 
$\Fr(H_1) = 5$ and $\Fr(H_2) = 1$,   
$\Fr(H) = 5\cdot 5 + 7 \cdot 1 + 35 = 67$. We have
\begin{align*}
J_{H, 82}
&=J_{H, 67+5\cdot3}
=J_{H_1, 5+3}J_{H_2, 1+5}
=X_2^2(X_3^3+X_4^2)\\
J_{H, 88}
&=J_{H, 67+7\cdot3}
=J_{H_1, 5+7}J_{H_2, 1+3}
=(X_1^4+X_2^3)X_3^2.
\end{align*}

(2) Let $H_1= \langle 5,6,9\rangle$ and $H = \langle 5H_1, 31\rangle
= \langle 25, 30, 45, 31\rangle$. We have $\Fr(H_1) = 13$.
For simplicity, we write 
$k[H_1]= k[x,y,z]/I_{H_1}, k[H] = k[x,y,z,w]/I_H$ and $X,Y,Z,W$ be 
inverse of $x,y,z,w$. Then we have $\Fr(H) = 5\cdot 13 + (5-1)\cdot 31
= 189$ and  
\begin{gather*}J_{H,214} = J_{H_1,13+5}W^4 = (Y^3+Z^2)W^4, 
\quad J_{219}= J_{H_1,13+6}W^4=X^2ZW^4,\\  
J_{H,234} =  J_{H_1,13+9}W^4= X^2Y^2W^4 \quad {\rm and} \quad
J_{H,220}  = J_{H_1,13+31}=J_{H_1,44} 
\end{gather*}
$J_{H_1,44}$ is a sum of $5$ monomials,
since
\begin{align*}
44&=1\cdot5+2\cdot6+3\cdot9
=1\cdot5+5\cdot6+1\cdot9\\
&=4\cdot5+1\cdot6+2\cdot9
=4\cdot5+4\cdot6+0\cdot9\\
&=7\cdot5+0\cdot6+1\cdot9
\end{align*}
\end{ex}

Assume that $H = \langle d H_1, m\rangle$ with $(d,m)=1$ and $m\not\in H_1$.
Then $H$ is not a gluing but we have the following criterion for $H$ to be  symmetric. 

\begin{prop}\label{symm <dH,m>} 
Let $H_1=\langle n_1,\ldots , n_{e-1}\rangle$ is a numerical 
semigroup with $\emb(H_1) = e-1$ and let $H = \langle d H_1,m \rangle$
with $(d,m)=1$ and $m\not\in H_1$. Then $H$ is symmetric if and only if 
$H' = \langle H_1, m \rangle=\langle n_1,\ldots , n_{e-1},m\rangle$
is symmetric. 
\end{prop}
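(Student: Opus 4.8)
The plan is to deduce both symmetries from a single combinatorial criterion on the Apéry set with respect to $m$, after showing that $\Ap(H,m)$ and $\Ap(H',m)$ differ by nothing more than the scalar $d$. The tool I would use is the classical Apéry characterization of symmetry: for a numerical semigroup $S$ and $n\in S_+$, if one writes $\Ap(S,n)=\{a_0<a_1<\cdots<a_{n-1}\}$ in increasing order, then $S$ is symmetric if and only if $a_i+a_{n-1-i}=a_{n-1}$ for every $i$. Since $m$ is a generator of both $H$ and $H'$, this criterion applies verbatim to each with $n=m$.

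First I would identify $\Ap(H',m)$. The key observation is that an Apéry element cannot involve the generator $m$: if $a\in\Ap(H',m)$ could be written $a=b+km$ with $b\in H_1$ and $k\geq1$, then $a-m=b+(k-1)m\in H'$, contradicting $a-m\notin H'$. Hence $\Ap(H',m)\subseteq H_1$, and since the Apéry set consists of the least element of $H'$ in each residue class modulo $m$, it must equal the set $W'=\{w'_0,\dots,w'_{m-1}\}$, where $w'_r$ is the least element of $H_1$ in the class $r$ (such an element exists for every $r$ because $H_1$ is cofinite). The identical argument applied to $H=\langle dH_1,m\rangle=dH_1+\NN m$ shows that $\Ap(H,m)$ is the set of least elements of $dH_1$ in each residue class modulo $m$.

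Next I would invoke the coprimality $\gcd(d,m)=1$. Multiplication by $d$ permutes the residue classes modulo $m$, so the least element of $dH_1$ in the class $r$ equals $d$ times the least element of $H_1$ in the class $d^{-1}r$; letting $r$ run over all classes gives the set equality
\[
\Ap(H,m)=d\cdot\Ap(H',m).
\]
Because $d>0$, multiplication by $d$ preserves the increasing order, so if $\Ap(H',m)=\{a_0<\cdots<a_{m-1}\}$ then $\Ap(H,m)=\{da_0<\cdots<da_{m-1}\}$ with matching indices. The symmetry condition $a_i+a_{m-1-i}=a_{m-1}$ for $H'$ is therefore equivalent, upon multiplying by $d$, to $da_i+da_{m-1-i}=da_{m-1}$, which is exactly the symmetry condition for $H$; dividing by $d$ yields the reverse implication. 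Hence $H$ is symmetric if and only if $H'$ is.

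The main obstacle is the Apéry-set identification, namely the proof that Apéry elements never use the generator $m$ and so live in $H_1$ (respectively $dH_1$); everything downstream is then forced. Once this ``$m$-free'' structure is in hand, the coprimality of $d$ and $m$ makes the scaling $\Ap(H,m)=d\,\Ap(H',m)$ immediate, and the homogeneity of the Apéry symmetry criterion under multiplication by $d$ finishes the argument. I would also record the clean auxiliary fact that $x\in H'\iff dx\in H$ for every $x\in\NN$ --- the forward direction is trivial and the converse uses $\gcd(d,m)=1$ to pass from $d(x-h)=km$ to $d\mid k$ --- which pins down precisely where the coprimality hypothesis is essential and cross-checks the scaling relation.
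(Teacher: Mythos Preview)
Your argument is correct. The key identity $\Ap(H,m)=d\cdot\Ap(H',m)$ is established cleanly: Ap\'ery elements with respect to $m$ cannot use the generator $m$, so they lie in $H_1$ (resp.\ $dH_1$); coprimality of $d$ and $m$ then makes the scaling bijection immediate, and the Ap\'ery-set criterion for symmetry is homogeneous of degree one, so it transfers under multiplication by $d$.

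This is a genuinely different route from the paper's proof. The paper argues directly on pseudo-Frobenius numbers: assuming $H'$ symmetric, it exhibits $\phi=d\,\Fr(H')+(d-1)m$ and checks by a case analysis on residues modulo $d$ that $\phi-x\in H$ for every $x\notin H$, forcing $\PF(H)=\{\phi\}$; for the converse it lifts two distinct elements of $\PF(H')$ to two distinct elements of $\PF(H)$. Your approach is more structural and arguably more transparent: the single equality $\Ap(H,m)=d\,\Ap(H',m)$ does all the work, and it also recovers the paper's Frobenius formula via $\Fr(H)+m=\max\Ap(H,m)=d(\Fr(H')+m)$. The paper's approach, on the other hand, gives the explicit description of $\PF(H)$ in both the symmetric and non-symmetric cases without passing through the Ap\'ery criterion; your scaling relation would yield the same (maximal elements of $\Ap(H',m)$ under $\le_{H'}$ correspond to those of $\Ap(H,m)$ under $\le_H$), but you do not spell this out.
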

\begin{proof} Assume $H'$ is symmetric. We will show that 
$\PF(H)= \{d \Fr(H') + (d-1)m\}$. We denote $\phi = d \Fr(H') + (d-1)m$.
Actually, it is easy to show that 
$\phi \not\in H$ and for any $h\in H_+,
h +\phi\in H$. Conversely, take any $x\in \Z, x\not\in H$.
If $x = dx'$, then $x'\not\in H'$ and since $H'$ is symmetric, 
$\Fr(H') - x'\in H_1$ and hence $\phi -x \in H$. Otherwise, take 
$s, 0<s< d$ such that $x\equiv sm\; (\mod d)$. Then we can write 
$x = sm + y$ with $y \in d \Z$ and since $x \not\in H, y/d \not\in H'$.   
This implies that $\phi - x = (d-1-s)m + d(\Fr(H') - y/d) \in H$. \par
Conversely, assume that $H'$ is not symmetric.  Take $f\ne f' \in \PF(H')$ 
and put $\phi = (d-1)m + df, \phi' = (d-1)m + df'$. Then it is easy to see
that $\phi, \phi' \in \PF(H)$, showing that $H$ is not symmetric.      
\end{proof}

\begin{rem} The analogous statement for "almost symmetric" does not hold
(\cite{N}, 6.7).  
\end{rem}

\section{Defining equations of $k[H]$ when $H$ is a
complete intersection.} 

In this section,
we investegate the case when $J_{H, \Fr(H) + n_i}$ is a monomial.
First note
$J_{H, \Fr(H) + n_i}$ is a monomial
if and only if $\Fr(H) + n_i$ has UF,
i.e. a unique factorization in $H$.

We give definition of free numerical semigroup,
which is found in \cite{BC} or \cite{RS}.

\begin{defn}[cf {\cite[\S 9.4]{RS}}]
A numerical semigroup $H=\langle n_1,\ldots ,n_e\rangle$
is free, if, by reordering $n_1, \dots, n_e$,
the following condition is satisfied:

\vspace*{2mm}

$n_i/d_{i+1}\in\langle n_1/d_i, \dots, n_{i-1}/d_i\rangle$
where $d_i=\gcd(n_1, \dots, n_{i-1})$
 for $i=2, \dots, e$.

\vspace*{2mm}

\end{defn}

If a numerical semigroup $H$ 
is free, i.e. satifies the above condition,
then it is completely glued, i.e. $I_H$ is a complete intersection,
thus symmetric and
we can compute its Frobenius number
using
\[
\Fr(H)=(d_2/d_3-1)n_2+(d_3/d_4-1)n_3+\cdots(d_e/d_{e+1}-1)n_e-n_1.
\]
Maybe, this formula
is the reason to study free numerical semigroups in long time.

\begin{thm}\label{monomial} 
Assume that $H=\langle n_1,\ldots ,n_e\rangle$ is symmetric.
Then the following are equivalent:
\begin{enumerate}
\item
$H$ is free.
\item
$J_{\Fr(H)+m}$ is a monomial for some $m\in H\setminus\{0\}$.
\end{enumerate}
\end{thm}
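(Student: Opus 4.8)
The plan is to run both implications through the identity $\Ann_S(J_{H,\Fr(H)+m}) = I_H + (x^{\ab})$ furnished by Theorem~\ref{I_H+(x_i)} for symmetric $H$, where $m = \deg_H \ab$. Since (as already noted) $J_{H,\Fr(H)+m}$ is a monomial $X^{\bb}$ exactly when $\Fr(H)+m$ has a unique factorization, and $\Ann_S(X^{\bb}) = (x_1^{b_1+1}, \dots, x_e^{b_e+1})$, condition (2) is equivalent to the statement that for some $\ab$ with $\deg_H\ab = m$,
\[ I_H + (x^{\ab}) = (x_1^{b_1+1}, \dots, x_e^{b_e+1}) \]
is a monomial complete intersection; equivalently, by Lemma~\ref{mJne0}(3) and the symmetry of $\Ap(H,m)$, the Ap\'ery set $\Ap(H,m)$ is a ``box'', i.e.\ every element is uniquely of the form $\sum_j d_j n_j$ with $0 \le d_j \le b_j$. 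I would prove both directions against this reformulation.

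For (1) $\Rightarrow$ (2) I would induct on $e$ using the gluing structure of a free semigroup. After reordering, $H = \langle d H', n_e\rangle$ is a gluing with $H_2 = \NN = \langle 1\rangle$, where $H' = \langle n_1/d, \dots, n_{e-1}/d\rangle$ is again free (hence symmetric) and $d = \gcd(n_1,\dots,n_{e-1})$ is coprime to $n_e \in H'$. Corollary~\ref{glued_monomial} then gives, for $h \in \Ap(H', n_e)$,
\[ J_{H,\Fr(H)+dh} = J_{H',\Fr(H')+h}\, X_e^{d-1}, \]
which is a monomial precisely when $J_{H',\Fr(H')+h}$ is. The only delicate point is to produce, from the inductive hypothesis, a monomial at an $h$ that actually lies in $\Ap(H', n_e)$. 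I would handle this by strengthening the induction so that $h = \mult(H')$: since $n_e$ is a minimal generator of $H$ distinct from those of $dH'$ and $d\ge 2$, one checks $n_e \neq \mult(H')$, so $\mult(H') - n_e < 0$ and $\mult(H') \in \Ap(H', n_e)$; choosing the free ordering to begin with the multiplicity makes $d\,\mult(H') = \mult(H)$, so the strengthened hypothesis propagates and $m = \mult(H)$ works. The fiddly part here is precisely this bookkeeping — guaranteeing a free ordering whose peeled generator is not the multiplicity.

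For (2) $\Rightarrow$ (1) I would first extract that $H$ is a complete intersection and then upgrade ``complete intersection'' to ``free''. The first step is clean: $t^m = \Phi_H(x^{\ab})$ is a nonzerodivisor on the one-dimensional domain $k[H]$, and the reformulation shows $k[H]/(t^m) \cong S/(x_1^{b_1+1},\dots,x_e^{b_e+1})$ is a zero-dimensional complete intersection. Since $k[H]$ is Cohen--Macaulay and $t^m$ is a homogeneous regular element, the standard fact that the complete-intersection property ascends along a regular element gives that $k[H]$ is a complete intersection, i.e.\ $\mu(I_H) = e-1$.

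The main obstacle is the remaining step of (2) $\Rightarrow$ (1): passing from ``complete intersection with box-shaped Ap\'ery set'' to ``free''. This is the heart of the matter, because for $e \ge 4$ there exist complete intersection numerical semigroups that are not free, so it is the monomial (box) structure, and not merely the complete intersection property, that must be exploited. I would argue by induction on $e$: using the box $\Ap(H,m) = \prod_j \{0,\dots,b_j\}\,n_j$ together with the relations $x_j^{b_j+1} \equiv (\text{monomial}) \pmod{I_H}$, I would single out a generator $n_i$ for which $H$ splits as a gluing $\langle d H'', n_i\rangle$ with $H_2 = \NN$ and for which $H''$ inherits a box-shaped Ap\'ery set, so that freeness of $H$ follows from that of $H''$. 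Verifying that the box structure forces one factor of the gluing to be $\NN$ — rather than an arbitrary complete intersection — and that the box descends to $H''$ is the step I expect to require the most work.
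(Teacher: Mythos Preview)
Your direction $(1)\Rightarrow(2)$ is the paper's argument: successive gluing together with Corollary~\ref{glued_monomial}. Your extra care about $\mult(H')\in\Ap(H',n_e)$ is reasonable, though the paper simply asserts the conclusion.

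For $(2)\Rightarrow(1)$, you correctly extract that $k[H]$ is a complete intersection, but you stop precisely at the point where the paper supplies the decisive idea, and your proposed inductive descent (peel off one generator, show the box descends) is left entirely unspecified. The paper's argument is direct, not inductive, and proceeds as follows. First it reduces from an arbitrary $m$ to $m=n_1$: if $n_1\le_H m$ then by Lemma~\ref{mJne0} $J_{H,\Fr(H)+n_1}=x^{\cb}J_{H,\Fr(H)+m}$ is again a monomial. This is a real simplification you do not make; with $m=n_1$ one has $I_H+(x_1)=(x_1,x_2^{a_2+1},\dots,x_e^{a_e+1})$, and since $\mu(I_H)=\mu((I_H+(x_1))/(x_1))=e-1$ one obtains explicit generators $f_i=x_i^{a_i+1}-m_i$ with $x_1\mid m_i$, not merely the abstract CI property. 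The key step is then Lemma~\ref{Lem2} (Lemma~2 of \cite{W73}): in a complete intersection presentation no $p$ of the $f_i$ can lie in an ideal generated by $p$ variables. Applying this to $\aa_p=(x_{e-p+1},\dots,x_e)$ and reordering, one forces each $m_l$ to involve only $x_1,\dots,x_{l-1}$, i.e.\ the relations are triangular. Finally a counting argument closes the proof: from $(a_l+1)n_l=\deg m_l\in\sum_{j<l}\NN n_j$ one gets $d_l/d_{l+1}\mid a_l+1$, say $a_l+1=c_l\,d_l/d_{l+1}$; but $n_1=\dim_k S/(I_H+(x_1))=\prod_{l\ge2}(a_l+1)=n_1\prod_{l\ge3}c_l$, forcing all $c_l=1$ and hence freeness.

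So the gap in your proposal is the passage from ``complete intersection with box Ap\'ery set'' to ``free'': you flag it as the main obstacle but offer no mechanism. The paper's mechanism is the triangularization Lemma~\ref{Lem2} together with the length identity $n_1=\prod(a_l+1)$; neither ingredient appears in your outline, and without something playing the role of Lemma~\ref{Lem2} it is not clear how your inductive plan would single out the generator to peel off.
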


Before proving this, we recall Lemma 2 from \cite{W73}.

\begin{lem}\label{Lem2}
Let $H=\langle n_1,\ldots ,n_e\rangle$ be a  semigroup which is a 
complete intersection, and let $(g_1,\ldots , g_{e-1})$ be a set of 
minimal generators of $I_H$. 
If  $\aa_p$ is an ideal generated by a set of $p$ variables, 
then there exists at most $p-1$ $g_i$'s which belong to $\aa_p$.
\end{lem}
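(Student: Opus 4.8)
The plan is to compare the Krull dimension of $S/(I_H + \aa_p)$ computed in two different ways. The complete intersection hypothesis will enter at exactly one point, namely through the fact that $\mu(I_H) = \height I_H = e-1$, so that a minimal generating set has precisely $e-1$ elements; everything else is dimension theory.

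Write $\aa_p = (x_i : i\in T)$ with $|T| = p\ge 1$, and after reordering suppose that exactly $g_1,\ldots,g_q$ lie in $\aa_p$ while $g_{q+1},\ldots,g_{e-1}\notin\aa_p$; the goal is to show $q\le p-1$. First I would observe that since $g_1,\ldots,g_q\in\aa_p$,
\[
I_H + \aa_p = (g_1,\ldots,g_{e-1}) + \aa_p = (g_{q+1},\ldots,g_{e-1}) + \aa_p,
\]
so $S/(I_H+\aa_p)$ is the quotient of $S/\aa_p$ by the images of the $e-1-q$ elements $g_{q+1},\ldots,g_{e-1}$. Since $S/\aa_p$ is a polynomial ring in the $e-p$ variables $\{x_i : i\notin T\}$, Krull's height theorem gives the lower bound
\[
\dim S/(I_H+\aa_p) \ge (e-p) - (e-1-q) = q-p+1 .
\]
On the other hand, $S/(I_H+\aa_p)\cong k[H]/\overline{\aa_p}$, where $\overline{\aa_p}$ is the ideal of $k[H]$ generated by $\{t^{n_i} : i\in T\}$. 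Because $T\ne\emptyset$, this is a nonzero ideal contained in $\m_H$ inside the one-dimensional domain $k[H]$, hence $k[H]/\overline{\aa_p}$ is Artinian and $\dim S/(I_H+\aa_p)=0$. Combining the two computations yields $0\ge q-p+1$, i.e. $q\le p-1$, as claimed. I note that the argument only uses that the $g_i$ generate $I_H$ and that there are $e-1$ of them, so it applies to an arbitrary minimal generating set.

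The computation is routine once it is set up; the step that needs care — and where the hypothesis is genuinely used — is the counting of the "remaining" generators, since the bound $q-p+1$ is strong enough only because $I_H$ is minimally generated by exactly $e-1$ elements, which is precisely the complete intersection assumption. (For a non-complete-intersection $H$ with $\mu(I_H)=e-1+s$ the identical argument would give only $q\le p-1+s$.) I would also spell out the two small facts that the dimension count rests on: that $S/\aa_p$ has dimension $e-p$, and that $\overline{\aa_p}$ is a nonzero proper ideal of the one-dimensional domain $k[H]$ so that the quotient is Artinian; the latter needs $p\ge 1$, which is implicit in "$\aa_p$ is generated by a set of $p$ variables."
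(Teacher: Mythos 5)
Your proof is correct, and there is in fact nothing in the paper to compare it against: the paper states this lemma without proof, recalling it verbatim as Lemma 2 of \cite{W73}, so your argument fills a gap the authors delegated to the literature. Every step checks: once $g_1,\dots,g_q\in\aa_p$ you indeed have $I_H+\aa_p=(g_{q+1},\dots,g_{e-1})+\aa_p$; since $S/\aa_p$ is a polynomial ring in $e-p$ variables, Krull's height theorem gives $\dim S/(I_H+\aa_p)\ge (e-p)-(e-1-q)$ (here it is worth noting, as a point to spell out, that the passage from ``height of $J$ at most $r$'' to ``$\dim R/J\ge \dim R-r$'' uses that $S/\aa_p$ is catenary and equidimensional, or one can localize at the homogeneous maximal ideal and invoke the local form of Krull's theorem); and $S/(I_H+\aa_p)\cong k[H]/(t^{n_i}\,:\,i\in T)$ is Artinian because $k[H]$ is a one-dimensional Noetherian domain and this ideal is nonzero (as $p\ge1$) and contained in $\m_H$. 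You also correctly isolate where the complete intersection hypothesis enters: only through $\mu(I_H)=\height I_H=e-1$, which makes the count of surviving generators $e-1-q$; minimality of the generating set plays no further role, and your remark that $\mu(I_H)=e-1+s$ would only yield $q\le p-1+s$ is accurate. The degenerate case $p=e$, where $\aa_p=\mm$ and the claim is trivial, is also handled consistently by your inequality. This dimension-counting argument is the natural one for the statement and is exactly in the spirit of the height computations of \cite{W73}; I see no gap.
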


\begin{proof}[Proof of Theorem \ref{monomial}]If $H$ is free,
we may assume that
$H$ satisfies the condition
\begin{quote}
$n_i\in\langle n_1/d_i, \dots, n_{i-1}/d_i\rangle$
where $d_i=\gcd(n_1, \dots, n_{i-1})$
 for $i=2, \dots, e$.
\end{quote}
Then 
$H$ is successively glued
and $J_{H, n_1}$ is a monomial by Corollary~\ref{glued_monomial}.

Conversely, assume that 
$J_{\Fr(H)+m}$ is a monomial where $m\in H\setminus\{0\}$.
We may assume $1\in\supp m$, i.e. $m-n_1\in H$.
Then $J_{\Fr(H)+n_1}$ is a monomial by Lemma~\ref{mJne0}.
Put $J_{\Fr(H)+n_1}=\prod_{i=2}^e X_i^{a_i}$.
Then
\[I_H + (x_1) = 
\Ann_{S}(\prod_{i=2}^e X_i^{a_i})
= (x_1)+(x_i^{a_i+1})_{i=2}^e.\] 
In particular, $I_H$ is a complete 
intersection and  $I_H = (f_2, \ldots , f_e)$, where we put 
\[f_i = x_i^{a_i+1} - m_i,\] 
 where $m_i$  is a monomial of 
$\{x_j\;|\; j\ne i\}$ with $x_1 | m_i$.

Put $\aa_p=(x_i)_{i>e-p}$ for $p=1, \dots, e-1$.
By Lemma \ref{Lem2},
there is $l$ with $f_l\notin\aa_{e-1}$.
After reordering the variables, we may assume
$l=2$. Then $m_2=x_1^{b_{11}}$ for $b_{11}>0$.
Note $a_2+1=n_1/\gcd(n_1, n_2)$.
Similarly,
we may assume $f_l\notin\aa_{e-l+1}$ for $l=3, \dots, e-1$.
Then $m_l=x_1^{b_{l1}}\cdots x_{l-1}^{b_{l(l-1)}}$
where $b_{lj}\geq0$ for each $j$
and $l=3, \dots, e-1$.
Put $d_l=\gcd(n_1, \dots, n_{l-1})$ for $l=3, \dots, e+1$.
Since $(a_l+1)n_l=\sum_{j<l}b_{lj}n_j$,
$d_l/d_{l+1}$ divides $a_l+1$ for $l=3, \dots, e$.
Put $a_l+1=c_ld_l/d_{l+1}$  for $l=3, \dots, e$.
We have
\[
n_1
=(a_2+1)\cdots(a_e+1)
=(n_1/d_3)\cdot(c_3d_3/d_4)\cdots(c_ed_e/d_{e+1})
=c_3\cdots c_en_1
\]
and $c_3=\cdots=c_e=1$.
Hence
$n_l/d_{l+1}\in\langle n_1/d_l, \dots, n_{l-1}/d_l\rangle$
for $l=2, \dots, e$
and $H$ is free.
\end{proof}

\begin{rem}
The implication from $(2)$ to $(1)$ in above is essentially proved in
\cite[Proposition 3.5]{Et}.
\end{rem}

\begin{ex}
Let $H=\langle 4, 6, 5\rangle$.
Then $H$ is free with $F(H)=7$ and
$J_{H, 11}=X_2X_3$, $J_{H, 12}=X_1^3+X_3^2$.
Hence, even if $H$ is free,
$J_{H, \Fr(H)+n_i}$ is not always a monomial.
\end{ex}

\begin{ex}
Let $H=\langle 3, 4, 5\rangle$.
Then $F(H)=2$ and
$J_{H, 5}=X_3$, $J_{H, 6}=X_1^2$ and $J_{H, 7}=X_1X_2$.
Hence, every
$J_{\Fr(H)+n_i}$ is a monomial.
However, $H$ is not symmetric and $H$ is not free.
Hence the assumption of the above theorem is necessary.
\end{ex}

\begin{prop}\label{monomial_strong} 
Let $H=\langle n_1,\ldots ,n_e\rangle$.
Then the following are equivalent:
\begin{enumerate}
\item
$I_H$ is a complete intersection generated by the same degree $d$.
\item
$J_{H, \Fr(H)+n_i}$ is a monomial for every $i$.
\end{enumerate}
In this case, there are 
pairwise coprime positive numbers
$\alpha_1, \dots, \alpha_e>1$ 
with $d=\prod_{i=1}^e\alpha_i$ and 
$n_j=\prod_{i\ne j}^e\alpha_i$ for each $j$.
\end{prop}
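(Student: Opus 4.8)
The plan is to route both implications through the explicit structure in the last sentence, treating that structure as the real content. First I record its easy direction: if $n_j=\prod_{i\ne j}\alpha_i$ with the $\alpha_i>1$ pairwise coprime, then $\alpha_i n_i=\prod_k\alpha_k=:d$ for every $i$, so the binomials $x_i^{\alpha_i}-x_{i+1}^{\alpha_{i+1}}$ $(1\le i\le e-1)$ all have degree $d$ and form a regular sequence whose quotient is $k[H]$; this gives (1), and $k[H]$ is then a complete intersection, hence Gorenstein, so $H$ is symmetric. Reducing this chain modulo $x_i$ collapses it to the monomial ideal $(x_i)+(x_j^{\alpha_j}:j\ne i)$, so by Theorem \ref{I_H+(x_i)} and the correspondence of Proposition \ref{Inv-polyn} we read off $J_{H,\Fr(H)+n_i}=\prod_{j\ne i}X_j^{\alpha_j-1}$, a monomial; that is (2).

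For (1) $\Rightarrow$ (2) it therefore suffices to derive the structure from a single–degree complete intersection. Since $I_H$ is generated in the one degree $d$, the module $I_H/\mm I_H$ is concentrated in degree $d$, so every minimal generator has degree exactly $d$; in particular each canonical binomial $f_i=x_i^{\alpha_i}-\prod_{j\ne i}x_j^{\alpha_{ij}}$ of Definition \ref{defH}(4), being a minimal generator, satisfies $\alpha_i n_i=d$. Comparing Hilbert series gives $\sum_{n\in H}T^n=(1-T^d)^{e-1}/\prod_i(1-T^{n_i})$ for $k[H]$; multiplying by $(1-T)$ and letting $T\to1$ yields $1=d^{\,e-1}/\prod_i n_i$, i.e. $\prod_i n_i=d^{\,e-1}$. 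Setting $\alpha_i=d/n_i$ then forces $\prod_i\alpha_i=d$ and $n_j=\prod_{i\ne j}\alpha_i$; here $\alpha_i>1$ because $n_i$ is a minimal generator (else $f_i$ would express $n_i\in\langle n_j:j\ne i\rangle$), and the $\alpha_i$ are pairwise coprime because a common prime factor of two of them would divide every $n_j$, contradicting $\gcd(n_1,\dots,n_e)=1$. This establishes the structure, and hence (2) by the first paragraph.

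The reverse implication (2) $\Rightarrow$ (1) is the hard one, and I expect the principal obstacle to be showing that (2) forces $H$ to be symmetric. Writing $J_{H,\Fr(H)+n_i}=\prod_{j\ne i}X_j^{a_j^{(i)}}$, one has $\Ann_S(J_{H,\Fr(H)+n_i})=(x_i)+(x_j^{a_j^{(i)}+1}:j\ne i)$ and $\dim_k S/\Ann_S(J_{H,\Fr(H)+n_i})=\prod_{j\ne i}(a_j^{(i)}+1)$. If this product can be shown to equal $n_i$ (that is, the monomial exhausts the Apéry set $\Ap(H,n_i)$), then the bound $\dim_k\le n_i-(\type(H)-1)$ of Theorem \ref{AS} immediately gives $\type(H)=1$, i.e. $H$ symmetric. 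The care needed here is genuine: for non‑symmetric $H$ the monomial can be \emph{deficient}, as in the example $\langle 3,4,5\rangle$ treated just above, where every $J_{H,\Fr(H)+n_i}$ is a monomial yet $\prod_{j\ne i}(a_j^{(i)}+1)<n_i$. Thus the argument must use (2) for all $i$ in an essential way and show the boxes fill $\Ap(H,n_i)$; ruling out the deficient case is the step I expect to cost the most.

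Once symmetry is in hand the rest is comparatively routine. By Theorem \ref{I_H+(x_i)} we then have $I_H+(x_i)=\Ann_S(J_{H,\Fr(H)+n_i})=(x_i)+(x_j^{a_j^{(i)}+1}:j\ne i)$, a monomial complete intersection; since $x_i$ is a nonzerodivisor on the domain $k[H]$, the mechanism of Theorem \ref{monomial} shows $I_H=(g_1,\dots,g_{e-1})$ is a complete intersection and $H$ is free. It remains to upgrade "free" to "generated in one degree", which is strictly more—the free complete intersection $\langle 4,6,5\rangle$ has $J_{H,\Fr(H)+n_3}$ a genuine binomial and so fails (2). Comparing Hilbert series and cancelling $\prod_{m\ne i}(1-T^{n_m})$ gives, for every $i$, the identity $\prod_l(1-T^{\deg g_l})=\prod_{j\ne i}(1-T^{(a_j^{(i)}+1)n_j})$, whence the multiset $\{\deg g_l\}_l$ equals $\{(a_j^{(i)}+1)n_j\}_{j\ne i}$ for each $i$; fed into the normal form of the generators of a free semigroup (where $\deg g_l=(d_l/d_{l+1})n_l$ with $d_l=\gcd(n_1,\dots,n_{l-1})$), this should force all $\deg g_l$ to a common value $d$, after which the explicit $\alpha_i=d/n_i$ of the forward direction completes the argument.
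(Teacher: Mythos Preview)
Your $(1)\Rightarrow(2)$ via the Hilbert-series identity $\prod_i n_i=d^{\,e-1}$ is correct; the paper takes a more elementary route, arguing directly that any mixed factorization $d=\sum_i a_in_i$ would contradict the minimality of some $\alpha_j$, so the only factorizations of $d$ are the $e$ pure powers and $I_H=(x_i^{\alpha_i}-x_1^{\alpha_1})_{i\ge2}$ immediately.

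For $(2)\Rightarrow(1)$ your plan has a fatal gap, and you have already exhibited the obstruction yourself: $H=\langle 3,4,5\rangle$ satisfies $(2)$ for \emph{every} $i$---all three $J_{H,\Fr(H)+n_i}$ are monomials---yet is not symmetric and not a complete intersection. Your proposal to ``use $(2)$ for all $i$ in an essential way'' to force symmetry therefore cannot work, because $\langle 3,4,5\rangle$ already uses $(2)$ for all $i$; the implication $(2)\Rightarrow(1)$ is simply false without an added symmetry hypothesis. The paper's proof makes the same unacknowledged leap: it opens with $I_H+(x_i)=(x_i)+(x_j^{\alpha_j})_{j\ne i}$, which by Theorem~\ref{I_H+(x_i)} is the equality $I_H+(x_i)=\Ann_S(J_{H,\Fr(H)+n_i})$ and so presupposes $\type(H)=1$. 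Granting symmetry, however, the paper's argument is far shorter than your planned detour through freeness and Hilbert-series multiset matching: from $\Fr(H)+n_i=\sum_{j\ne i}(\alpha_j-1)n_j$ for every $i$ (the exponent $\alpha_j$ being forced to agree with that of Definition~\ref{defH}(4), hence independent of $i$) one subtracts any two such equations to get $\alpha_in_i=\alpha_jn_j$, producing the single degree $d$ and the explicit generators at once.
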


\begin{proof}
Assume that $I_H$ is generated by binomials of the same degree $d$.
Then $\alpha_in_i=d$ for $i=1, \dots, e$.
And there are exactly $e$ factorizations of $d$ in $H$.
For, if $d=\sum_ia_in_i$ and if there are $j\ne j'$ with $a_ja_{j'}\ne0$,
then we have 
$(\alpha_j-a_j)n_j=\sum_{i\ne j}a_in_i$
and $\alpha_j-a_j<\alpha_j$,
a contradiction to the definition of $\alpha_j$.
Hence $I_H$ is a complete intersection.
Also,
\[
\Fr(H)+n_j=\sum_{i=1}^e(\alpha_i-1)n_i-(\alpha_j-1)n_j
\]
and $n_j=\prod_{i=1}^e\alpha_i/\alpha_j$.
Note that $\alpha_1, \dots, \alpha_e$ are pairwise coprime.
Conversely,
if $J_{\Fr(H)+n_i}$ is a monomial for every $i$,
we have
\[
I_H+(x_i)=(x_i)+(x_j^{\alpha_j})_{j\ne i}
\]
and $\Fr(H)+n_i=\sum_{j\ne i}(\alpha_j-1)n_j$ for each $i$,
thus
$\Fr(H)+\alpha_in_i=\sum_{j=1}^e(\alpha_j-1)n_j$
and $\alpha_in_i=\alpha_jn_j$ for each $i, j$.
This implies
\[
I_H=(x_2^{\alpha_2}-x_1^{\alpha_1}, \dots, x_e^{\alpha_e}-x_1^{\alpha_1}).
\]
In this case, 
$\alpha_1, \dots, \alpha_e$ are pairwise coprime positive numbers
greater than one.
and  $n_j=\prod_{i=1}^e\alpha_i/\alpha_j$ for $j=1, \dots, e$.
\end{proof}

\begin{rem}
If $\alpha_1, \dots, \alpha_e$ are pairwise coprime positive numbers
greater than one,
and
if we put $n_j=\prod_{i=1}^e\alpha_i/\alpha_j$ for $j=1, \dots, e$,
then $H=\langle n_1, \dots, n_e\rangle$
satisfies the conditions of above theorem.
\end{rem}

\section{Symmetric semigroups with  small multiplicity}

Let $H=\langle n_1,\ldots ,n_e\rangle$ be a {\it symmetric} semigroup 
with $\emb(H) = e$ and $m(H) =n_1$. Note that $m(H) \ge e+1$ if $H$ is 
symmetric.
Then since $\ell_{S}(S/\Ann_S(J_{H,\Fr(H)+n_1})) = n_1$, 
if $n_1$ is small in regards to $e$, the possibility of 
$J_{H,\Fr(H)+n_1}$ is very limited and this fact characterizes $H$. 
We classify all the possibilities of $J_{H,\Fr(H)+n_1}$ for $n_1\le e+3$.
Note that for every $n_i, i\ge 2$, $n_i\le_H \Fr(H) + n_1$ and hence 
$J_{H,\Fr(H)+n_1}$ must contains $X_i$. 

\begin{defn} If $M=\prod_{i=1}^e X_i^{a_i}$ is a monomial in 
$\EE = k[X_1,\ldots, X_e]$, we say 
\[\ord M = \sum_{i=1}^e a_i\]  
and if $J = \sum M_j\in \EE$, then 
\[\ord J = \max \ord M_j,\] 
\end{defn} 

\begin{ex} Let $H$ be as above with $n_1=m(H) <n_2< \ldots <n_e$ 
and we put  $J = J_{H, \Fr(H) + n_1}$. If $n_1\le e+3$, the possibilities 
of $J$ is as follows.
\begin{enumerate} 
\item If $n_1= e+1$ and if $e= 2e' +1$ is odd, then 
$J = X_2X_e + X_3X_{e-1}+ \ldots +X_{e'+1}X_{e'+2}$.
\item If $n_1 = e+1$ and if $e=2e'$ is even, then 
$J = X_2X_e +  \ldots +X_{e'}X_{e'+2}+ X_{e'+1}^2$.
\item If $n_1= e+2$ and $e=2e'+1$ is odd, then 
$J = X_2^3 + X_3X_e+ \ldots + X_{e'+1}X_{e'+3}+ X_{e'+2}^2$.
\item If $n_1= e+2$ and $e=2e'$ is even, then 
$J = X_2^3 + X_3X_e+ \ldots + X_{e'+1}X_{e'+2}$. 
\item If $n_1= e+3$ and $e=2e'+1$ is odd, then 
$J = X_i^2X_j + \sum X_kX_l$ satisfying the conditions;\par
 (a) Every $X_p$ appears exactly once;  and \par
 (b) $J$ is homogeneous of degree $\Fr(H) + e+3$.
\item  If $n_1= e+3$ and $e=2e'$ is even, then $J$ is either 
as in (5) or 
\[(6b) \quad J = X_2^4 + X_3X_e+ \ldots + X_{e'+1}X_{e'+2}.\] 
\end{enumerate}
\end{ex}

\begin{rem}
\begin{enumerate}
\item
By Proposition~\ref{type1},
there exists a numerical semigroup $H$
satisfying the condition $(1)$, $(2)$, $(3)$, $(4)$ or $(6b)$
in the example.
\item
If $e$ is odd, then $J$ as (6b) cannot appear, since 
$X_2^4$ and $X_j^2$ cannot appear simultaneously. 
\item
$H=\langle8, 9, 10, 14, 15\rangle$ 
is an example of $(5)$.
For, $\Fr(H)=21$ and
$9+10\cdot2=14+15$ implies
$J_{H, 28}=X_2X_3^2+X_4X_5$.
\end{enumerate}
\end{rem}

\begin{proof} We put $S_1 = S/(x_1)$ and $\m_1 =(x_2,\ldots ,x_e)$.
We put $Q_J = \Ann_{S_1}(J)$. If $\ord J = s$, then 
$Q_J \supset \m_1^{s+1}$ and $\ell_{S_1}((Q_J \cap \m_1^s)/ \m^{s+1})=1$
since $S_1/Q_J$ is Gorenstein. Thus we have $n_1 \ge e + (s -1)$.
\par
We denote by $\bar{\m}_1$ the maximal ideal of $S_1/Q_J$. 
Then $\bar{\m}_1^s \ne 0$ and $\bar{\m}_1^{s+1}=0$.
If $n_1 = e+2$, then $s=3$ and since 
$\ell_{Q_J}(\bar{\m}_1^2/\bar{\m}_1^3)=1$, $J = X_2^3 + \sum M_j$,
where $\ord {M_j} =2$ for every $j$. Hence we have (3), (4).\par

If $n_1= e+3$, either we have $s=3$ or $s=4$. If $s=3$, then 
$\dim_k (\bar{\m}_1^2/\bar{\m}_1^3)=2$ and since only $2$ monomials of 
order $2$ survives, $M = X_i^2X_j$ is the only possibility of 
monomial in $J$ with order $3$.  \par
If $n_1=e+3$ and $s=4$, then since 
$\dim_k (\bar{\m}_1^2/\bar{\m}_1^3)=\dim_k (\bar{\m}_1^3/\bar{\m}_1^4)=1$
generated by the image of 
$J$ must contain $X_2^4$ and other monomials are order $2$. 
\end{proof}

\begin{cor}\label{small_m(H)} 
Let $H = \langle n_1, n_2, \ldots ,n_e\rangle$ be a symmetric numerical semigroup with $n_1 < n_2 < \ldots < n_e$. 
\begin{enumerate} 
\item (\cite{RS} Proposition 4.10) If $n_1=e+1$, then $n_2+n_e= 
n_3+n_{e-1}= \ldots = \Fr(H) + e+1$.
\item If $n_1= e+2$, then $3n_2 = n_3+n_e = n_4+n_{e-1} = \ldots = 
\Fr(H) + e+2$
\end{enumerate}  
\end{cor}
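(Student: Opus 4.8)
The plan is to read off the asserted degree equalities directly from the explicit shape of the inverse polynomial $J := J_{H, \Fr(H)+n_1}$ that was already determined in the Example preceding this corollary. The key observation is that all the inverse monomials occurring in $J$ share a common $H$-degree, so equating the $H$-degrees of the individual terms listed in that classification immediately produces the claimed chains of equalities.

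First I would record the homogeneity of $J$. Since $H$ is symmetric, Theorem~\ref{I_H+(x_i)} gives $I_H+(x_1)=\Ann_S(J)$, and by Proposition~\ref{Inv-polyn} the inverse polynomial $J$ is homogeneous of $H$-degree $a(k[H]/(t^{n_1}))=\Fr(H)+n_1$. By Definition~\ref{J_H,h-def}, $J$ is the sum of inverse monomials $X^{\ab}$ with $\deg_H\ab=\Fr(H)+n_1$; in particular any two monomials appearing in $J$ have the same $H$-degree $\Fr(H)+n_1$. Thus the whole argument reduces to computing $\deg_H$ of each term exhibited by the classification.

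For part~(1), where $n_1=e+1$, cases (1) and (2) of the Example write $J$ as a sum of terms $X_iX_{e+2-i}$ (together with the central square $X_{e'+1}^2$ when $e$ is even). Since $\deg_H(X_iX_{e+2-i})=n_i+n_{e+2-i}$ and $\deg_H(X_{e'+1}^2)=2n_{e'+1}$, homogeneity forces each of these to equal $\Fr(H)+n_1=\Fr(H)+e+1$, which is exactly $n_2+n_e=n_3+n_{e-1}=\cdots=\Fr(H)+e+1$. For part~(2), where $n_1=e+2$, cases (3) and (4) write $J=X_2^3+X_3X_e+\cdots$ with the remaining generators $n_3,\dots,n_e$ paired (and a central square $X_{e'+2}^2$ in the odd case). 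Reading $\deg_H$ of the leading term gives $3n_2$, and of each paired term $X_iX_j$ gives $n_i+n_j$; equating all of these to $\Fr(H)+n_1=\Fr(H)+e+2$ yields $3n_2=n_3+n_e=n_4+n_{e-1}=\cdots=\Fr(H)+e+2$.

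The substantive work has already been carried out in establishing the shapes of $J$ in the Example, so here the only care needed is bookkeeping: matching the index pairing $i\leftrightarrow e+2-i$ (resp.\ the pairing of $n_3,\dots,n_e$), and treating the even and odd parities together with their central square terms. I do not expect any genuine obstacle beyond this indexing, since the homogeneity of $J$ does all of the real lifting.
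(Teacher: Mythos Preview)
Your proof is correct and is precisely the argument the paper intends: the corollary is stated without its own proof because it follows immediately from the Example's classification of $J_{H,\Fr(H)+n_1}$, by reading off that every monomial appearing in $J$ has $H$-degree $\Fr(H)+n_1$. The only content is the homogeneity of $J$ and the explicit term lists in cases (1)--(4), exactly as you describe.
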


\begin{ex}  
Consider the case $e=4$ and $H=\langle n_1, n_2, n_3, n_4\rangle$.
Let $S_1 = S/(x_1)=k[x_2,x_3,x_4]$ be a polynomial ring of $3$ variables and 
$J=J_{H, n_1}\in {\bf E}$ which is a sum of monic monomials of $X_2,X_3,X_4$ and we assume 
$x_iJ$  ($i=2,3,4$) are not $0$. Then after a suitable permutation of variables, we have;
\begin{enumerate}
\item 
If $\ell_S( S/ \Ann_S(J)) = 5$
(i.e. $n_1=5$), then
$2n_3=n_2+n_4$
and
$J = X_2X_4 + X_3^2$.
\item If $\ell_S( S/ \Ann_S(J)) = 6$, then $J = X_2^3  + X_3X_4$.
\item If $\ell_S( S/ \Ann_S(J)) = 7$, then $J = X_2^4 +X_3X_4$ or 
$X_2^2X_3 + X_4^2$ (in this case the numbers $n_2, n_3, n_4$ need not be 
an increasing sequence).
\end{enumerate} 
\end{ex}

\section{A proof of Bresinsky's Theorem for symmetric $H$ with $e=4$
using inverse polynomial}

By direct computation, we have

\begin{lem}\label{binomial_J_lemma}
Let $J=X^{\ab}+X^{\bb}\in\EE$ where $\ab=(a_i), \bb=(b_i)\in\NN^e$ and $e>0$.
Put $p_i=\max\{a_i, b_i\}+1$ and $q_i=\min\{a_i, b_i\}+1$ for each $i$.
Then
\[
\Ann_SJ
=(x_i^{p_i})_{i=1, \dots, e}
+(x_i^{q_i}x_j^{q_j})_{\begin{subarray}{c}a_i<b_i \\ a_j>b_j\end{subarray}}
+(x^{\ab}-x^{\bb}).
\]
Hence
\begin{enumerate}
\item if $e = 2 e'$ is even, then $\mu(\Ann_SJ)\leq e+e'^2+1$ and 
\item if $e= 2 e' + 1$ is odd, then $\mu(\Ann_SJ)\leq e+ e'(e'+1)$. 
\end{enumerate}
\end{lem}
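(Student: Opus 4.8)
The plan is to compute $\Ann_S J$ directly from the defining relation of the inverse system in Definition~\ref{Inv-syst}. A monomial $x^{\cb}$ with $\cb = (c_i)$ annihilates $J = X^{\ab}+X^{\bb}$ if and only if it annihilates both summands, and by the action rule $x^{\cb}\cdot X^{\ab}=0$ exactly when $\cb \not\le \ab$, i.e. when $c_i > a_i$ for some $i$. So $x^{\cb}$ kills $X^{\ab}$ iff $c_i \ge a_i+1$ for some $i$, and kills $J$ iff it kills $X^{\ab}$ and $X^{\bb}$ simultaneously. Writing $p_i = \max\{a_i,b_i\}+1$, the single variable power $x_i^{p_i}$ exceeds both $a_i$ and $b_i$ in the $i$-th slot and hence annihilates $J$; this accounts for the first block of generators. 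The remaining monomial annihilators are those $x^{\cb}$ that kill $X^{\ab}$ via one coordinate and $X^{\bb}$ via a \emph{different} coordinate: we need some $i$ with $c_i > a_i$ and some $j$ with $c_j > b_j$, and to make this minimal we take $i$ with $a_i < b_i$ (so $c_i \ge a_i+1 = q_i$ suffices to beat $a_i$ without already beating $b_i$) and $j$ with $a_j > b_j$, giving the generator $x_i^{q_i}x_j^{q_j}$. First I would verify that every monomial in $\Ann_S J$ lies in the monomial ideal generated by the $x_i^{p_i}$ and these $x_i^{q_i}x_j^{q_j}$, and conversely that each listed monomial indeed annihilates $J$.

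Next I would handle the binomial part. The element $x^{\ab}-x^{\bb}$ clearly annihilates $J$: applying $x^{\ab}$ to $J$ gives $X^{\ab-\ab} = X^{\mathbf 0}$ from the first term (and kills the second unless $\ab \ge \bb$), and symmetrically for $x^{\bb}$, so that $(x^{\ab}-x^{\bb})J = X^{\mathbf 0} - X^{\mathbf 0} = 0$ after checking the cross terms vanish. To see that the three blocks generate all of $\Ann_S J$, I would argue that $S/\Ann_S J$ has the $k$-basis given by the inverse monomials $X^{\db}$ dividing $X^{\ab}$ or $X^{\bb}$ (equivalently $\db \le \ab$ or $\db \le \bb$), since these are exactly the nonzero images of monomials under the action on $J$; a monomial $x^{\db}$ acts nontrivially precisely when $\db\le\ab$ or $\db\le\bb$. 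Then one checks that modulo the monomial ideal $(x_i^{p_i}) + (x_i^{q_i}x_j^{q_j})$ the surviving monomials are exactly these together with the single identification forced by $x^{\ab}\equiv x^{\bb}$, so the quotient by the full ideal on the right has the correct length and the inclusion $\Ann_S J \supseteq (\text{RHS})$ is an equality by a dimension count.

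The counting bounds in (1) and (2) then follow by estimating how many generators of each type can occur. The single variable powers contribute at most $e$ generators. For the mixed generators $x_i^{q_i}x_j^{q_j}$, the index $i$ ranges over coordinates with $a_i<b_i$ and $j$ over coordinates with $a_j>b_j$; if $r$ coordinates satisfy $a_i<b_i$ and $s$ satisfy $a_i>b_i$, the number of such pairs is at most $rs$, subject to $r+s\le e$. The product $rs$ is maximized when $r$ and $s$ are as equal as possible, giving $rs\le e'^2$ when $e=2e'$ and $rs \le e'(e'+1)$ when $e=2e'+1$. Adding the single binomial generator $x^{\ab}-x^{\bb}$ and the $e$ pure powers yields $\mu(\Ann_S J)\le e+e'^2+1$ in the even case and $\mu(\Ann_S J)\le e+e'(e'+1)$ in the odd case. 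The main subtlety, and the step I would be most careful about, is establishing that the listed generators are not merely contained in $\Ann_S J$ but actually generate it; the cleanest route is the length argument via the explicit monomial basis of $S/\Ann_S J$, verifying that no further relations are needed beyond the monomial ideal and the single binomial, so that equality holds rather than strict containment.
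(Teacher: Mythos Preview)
The paper gives no proof beyond the phrase ``by direct computation,'' so your outline is the natural elaboration. Your identification of the monomial part of $\Ann_S J$ as $\Ann_S(X^{\ab})\cap\Ann_S(X^{\bb})$ is correct, and the containment $(\text{listed generators})\subseteq\Ann_S J$ is fine once one assumes $\ab$ and $\bb$ are incomparable (so that the cross terms $x^{\ab}\cdot X^{\bb}$ and $x^{\bb}\cdot X^{\ab}$ vanish).

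The genuine gap is in the reverse inclusion, precisely at the dimension count you flag as the delicate step --- and in fact the equality in the lemma is \emph{false} as stated. Take $e=3$, $\ab=(3,2,0)$, $\bb=(0,1,4)$, so $J=X_1^3X_2^2+X_2X_3^4$. The listed ideal is
\[
I'=(x_1^4,\,x_2^3,\,x_3^5,\,x_1x_3,\,x_2^2x_3,\,x_1^3x_2^2-x_2x_3^4),
\]
and one checks $\dim_k S/I'=19$. But $x_1^3x_2-x_3^4$ also annihilates $J$ (since $x_1^3x_2\cdot J=X_2=x_3^4\cdot J$), and it does \emph{not} lie in $I'$; indeed $\dim_k S/\Ann_S J=\dim_k SJ=18$. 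The trouble is that whenever $\gcd(x^{\ab},x^{\bb})=x^{\gb}\ne 1$, a strictly smaller binomial such as $x^{\ab-\gb}-x^{\bb-\gb}$ can appear in $\Ann_S J$, and the single relation $x^{\ab}-x^{\bb}$ does not generate it over the monomial part. Your length argument would compute $|B|-1$ for $S/I'$ but a strictly smaller number for $S/\Ann_S J$, so equality fails. The cleanest fix is to add the hypothesis $\min\{a_i,b_i\}=0$ for every $i$ (equivalently $x^{\ab}$ and $x^{\bb}$ are coprime); under that assumption there is no nonzero $\db\le\gb$, your count goes through, and the listed generators really do generate. A secondary point: your bound in the odd case comes out to $e+e'(e'+1)+1$, not $e+e'(e'+1)$ as printed in the paper; the counterexample above (with $\mu=6>5=3+1\cdot 2$) shows that the paper's stated bound in part~(2) is already off by one.
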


\begin{ex}
Let $J=X_1\cdots X_s+X_{s+1}\cdots X_e$.
Then 
\[
\Ann_SJ=(x_1^2, \dots, x_e^2)
+(x_ix_j)_{\begin{subarray}{l}i=s+1, \dots ,e \\ j=1, \dots, s\end{subarray}}
+(x_1\cdots x_s-x_{s+1}\cdots x_e).
\]
If $e=2e'\geq 4$ is even and $s=e'$, then we see $\mu(\Ann_SJ)\leq e+e'^2+1$
and if $e = 2e' + 1$ and $s = e'$ or $e'+1$, then we have
$\mu(\Ann_SJ)\leq e+ e'(e'+1)$
\end{ex}

\begin{thm}\label{binomial_J_thm}
Let $H$ be a numerical semigroup of embedding dimension $e$.
Assume 
$J_{H, \Fr(H)+n_1}=X^a+X^b$ where $\ab=(a_i), \bb=(b_i)\in\NN^e$ 
and
there are numbers $1<s<s'<e$ satisfying
$a_i=b_i$ if $i=1, \dots, s$,
$a_i>b_i$ if $i=s+1, \dots, s'$,
and
$a_i<b_i$ if $i=s'+1, \dots, e$
(note $a_1=b_1=0$).
Then
\[
\mu(\Ann_S(J_{H, \Fr(H)+n_1}/(x_1))\leq e+\frac{(e-s)^2}4.
\]
If $s'=s+1$ and $s'<e-1$, then
\[
\mu(\Ann_S(J_{H, \Fr(H)+n_1}/(x_1))= 2e-s-2.
\]
If $s=e-2$, then $s'=e-1$ and
\[
\mu(\Ann_S(J_{H, \Fr(H)+n_1}/(x_1))=2e-s-3=e-1.
\]
In particular, $e=3$,
then ($s=1$ and ) 
$\Ann_S(J_{H, \Fr(H)+n_1}/(x_1))$ is a complete intersection.
\end{thm}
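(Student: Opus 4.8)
The plan is to turn the whole statement into a minimal‑generator count for $\Ann_{S_1}(J)$ with $S_1=S/(x_1)=k[x_2,\ldots,x_e]$, and to exploit the multiplicativity of Macaulay's inverse system across a tensor decomposition. Since $a_1=b_1=0$, the variable $x_1$ does not occur in $J$, so $\Ann_S(J)/(x_1)=\Ann_{S_1}(J)$; when $H$ is symmetric this ideal equals $(I_H+(x_1))/(x_1)$, so the number we compute is exactly $\mu(I_H)$ (Theorem~\ref{I_H+(x_i)} together with Definition~\ref{defk[H]}(4)). Because the coordinates $1,\ldots,s$ agree, $J$ factors as $J=\bigl(\prod_{i=2}^{s}X_i^{a_i}\bigr)\cdot\hat J$ with $\hat J$ a binomial in the \emph{disjoint} variables $x_{s+1},\ldots,x_e$. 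By multiplicativity of inverse systems,
\[
S_1/\Ann_{S_1}(J)\;\cong\;\bigl(k[x_2,\ldots,x_s]/(x_i^{a_i+1})\bigr)\otimes_k\bigl(k[x_{s+1},\ldots,x_e]/\Ann(\hat J)\bigr),
\]
the first factor being a monomial complete intersection. Hence
\[
\mu\bigl(\Ann_{S_1}(J)\bigr)=(s-1)+\mu\bigl(\Ann(\hat J)\bigr),
\]
and the whole theorem reduces to a statement about $\hat J$, a binomial in $N:=e-s$ variables with $s'-s$ \emph{dominant} coordinates (where $a_i>b_i$) and $e-s'$ \emph{recessive} ones (where $a_i<b_i$) and no agreeing coordinate.

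For the first inequality I would just count a generating set. Lemma~\ref{binomial_J_lemma} presents $\Ann(\hat J)$ using the $N$ pure powers, the $(s'-s)(e-s')$ cross terms, and the binomial, so $\mu(\Ann(\hat J))\le N+(s'-s)(e-s')+1$ and therefore $\mu(\Ann_{S_1}(J))\le e+(s'-s)(e-s')$. Since $(s'-s)+(e-s')=e-s$, the arithmetic–geometric mean inequality gives $(s'-s)(e-s')\le (e-s)^2/4$, which is the asserted bound.

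The case $s=e-2$ is where the tensor reduction pays off cleanly: then $s'=e-1$ is forced, and $\hat J$ is a binomial in the \emph{two} variables $x_{e-1},x_e$. Thus $k[x_{e-1},x_e]/\Ann(\hat J)$ is an Artinian Gorenstein quotient of a two‑variable polynomial ring, hence a complete intersection (a codimension‑two Gorenstein ideal is a complete intersection), so $\mu(\Ann(\hat J))=2$. This yields $\mu(\Ann_{S_1}(J))=(s-1)+2=e-1=2e-s-3$. For $e=3$ one has $s=1$, the first tensor factor is trivial, and the same two‑variable remark shows directly that $\Ann_{S_1}(J)$, equivalently $I_H$, is a complete intersection.

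The remaining case $s'=s+1<e-1$ requires showing $\mu(\Ann(\hat J))=2N-1$ for a binomial $\hat J$ with a single dominant variable $x_{s+1}$ and $N-1\ge 2$ recessive ones; this is the crux. I expect the clean model to be the \emph{coprime} one, $\hat J=X_{s+1}^{\,a}+\prod_{k}X_k^{b_k}$, where Lemma~\ref{binomial_J_lemma} applies directly: the minimal generators are the $N-1$ quadrics $x_{s+1}x_k$, the $N-1$ recessive pure powers $x_k^{b_k+1}$, and the binomial, while the pure power of the dominant variable is redundant (it lies in the ideal generated by one quadric $x_{s+1}x_k$ and the binomial), giving exactly $2N-1$. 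The genuine difficulty is the non‑coprime subcase, where the two monomials of $\hat J$ share the dominant and/or recessive variables: there $\Ann(\hat J)$ acquires lower‑degree binomial relations, and the long binomial $x^{\ab}-x^{\bb}$ itself ceases to be a minimal generator, so a naive count from Lemma~\ref{binomial_J_lemma} no longer gives the minimal number. To settle this uniformly I would link $\Ann(\hat J)$ to the complete intersection $\mathfrak{c}=(x_{s+1}^{\,a_{s+1}+1},x_{s+2}^{\,b_{s+2}+1},\ldots,x_e^{\,b_e+1})\subseteq\Ann(\hat J)$ and show that the linked ring $S/(\mathfrak{c}:\Ann(\hat J))$ has Cohen–Macaulay type $N-1$, whence $\mu(\Ann(\hat J))=N+(N-1)=2N-1$; equivalently, one computes the Hilbert function of the Gorenstein ring $k[x_{s+1},\ldots,x_e]/\Ann(\hat J)$ and reads off its first Betti number. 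Proving that the linked type is exactly $N-1$ (equivalently, that no further generators collapse) is the main obstacle, and it is precisely here that the single‑dominant‑variable hypothesis $s'=s+1$ is used.
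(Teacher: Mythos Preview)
Your tensor factorisation and the codimension--two Gorenstein trick are genuinely different from the paper's argument and, for the first inequality and for the case $s=e-2$, cleaner: the paper simply feeds $J$ into Lemma~\ref{binomial_J_lemma} as a binomial in the $e-1$ variables $x_2,\dots,x_e$, writes out the resulting list of generators, and counts. In particular the paper obtains only the cruder bound $e+\tfrac{(e-1)^2}{4}$ for the first assertion, whereas your factorisation of the common monomial $\prod_{i\le s}X_i^{a_i}$ yields the stated $e+\tfrac{(e-s)^2}{4}$.

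Where you run into trouble is the case $s'=s+1<e-1$. The paper does nothing subtle here either: it again invokes Lemma~\ref{binomial_J_lemma}, observes that the pure power $x_{s+1}^{a_{s+1}+1}$ can be dropped from the generating list, and counts the remaining $(s-1)+(e-s-1)+(e-s-1)+1=2e-s-2$ elements. That gives $\mu\le 2e-s-2$, which together with the $s=e-2$ case is all that is actually used in Corollary~\ref{J=m+m'} (for $e=4$ the parity of $\mu(I_H)$ in codimension three then forces $\mu\in\{3,5\}$). Your linkage programme, by contrast, tries to prove the \emph{exact} equality $\mu(\Ann(\hat J))=2N-1$ for an arbitrary binomial $\hat J$ with one dominant variable, and that target is false: already in the coprime case $\hat J=X_{s+1}+\prod_{k>s+1}X_k$ one has
\[
\Ann(\hat J)=\bigl(x_{s+1}-\textstyle\prod_{k>s+1}x_k,\;x_{s+2}^{2},\dots,x_e^{2}\bigr),
\]
a complete intersection with $\mu=N$, not $2N-1$; the cross terms $x_{s+1}x_k$ you list are redundant once the short binomial is present. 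Such $\hat J$ are excluded only by the numerical--semigroup hypothesis (here it would force $n_{s+1}=\sum_{k>s+1}n_k$, contradicting minimality of $n_{s+1}$ as a generator), and your linkage scheme never invokes that hypothesis, so it cannot close the gap. If you want the exact equality rather than the upper bound, the semigroup assumption has to enter the argument; otherwise, for the intended application you can stop at the inequality.
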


\begin{proof}
Note that $\Ann_SJ_{H, \Fr(H)+n_1}$ contains $x_1$.
And
\[
\mu(\Ann_SJ_{H, \Fr(H)+n_1}/(x_1))\leq
(e-1)+\frac{(e-1)^2}4+1=e+\frac{(e-1)^2}4.
\]
From Lemma~\ref{binomial_J_lemma}, if $s'=s+1$ and $s'<e-1$,
\begin{multline}
\Ann_SJ_{H, \Fr(H)+n_1}/(x_1)\\
=
(x_2^{a_2+1}, \dots, x_s^{a_s+1}, x_{s+2}^{a_{s+2}+1}, \dots, x_e^{a_e+1}, 
x_{s+1}^{b_{s+1}}x_{s+2}^{a_{s+2}},
\dots, x_{s+1}^{b_{s+1}}x_{e}^{a_{e}},
x^a-x^b)
\notag
\end{multline}
and, if $s=e-2$ and $s'=e-1$,
\[
\Ann_SJ_{H, \Fr(H)+n_1}/(x_1)
=(x_2^{a_2+1}, \dots, x_{e-2}^{a_{e-2}+1}, 
x_{e-1}^{b_{e-1}}x_{e}^{a_{e}},
x^a-x^b)
\]
\end{proof}

In case of $e=4$, 
the following corollary follows from Theorem~\ref{binomial_J_thm}.

\begin{cor}\label{J=m+m'} If $e =4$ and if $J_{H,\Fr(H)+n_i}$ is a sum of $2$ monomials, 
then $\Ann_S(J_{H,\Fr(H)+n_i}/(x_1))$ is generated by $3$ or $5$ elements.
\end{cor}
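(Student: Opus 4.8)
The plan is to reduce the whole statement to the generator counts already established in Theorem~\ref{binomial_J_thm}. After relabelling the generators we may assume $i=1$, so that $J:=J_{H,\Fr(H)+n_1}=X^{\ab}+X^{\bb}$ is a sum of two distinct monomials with $\ab=(a_j),\bb=(b_j)\in\NN^4$. The first thing I would record is that $a_1=b_1=0$, so that reduction modulo $x_1$ is the legitimate operation. Indeed, if some factorization $\ab$ of $\Fr(H)+n_1$ had $a_1>0$, then decreasing its first coordinate by one would produce a factorization in $H$ of $\Fr(H)+n_1-n_1=\Fr(H)$, contradicting $\Fr(H)\notin H$. Hence $X_1$ occurs in no monomial of $J$, so $x_1\in\Ann_S J$ and passing to $S_1=S/(x_1)$ makes sense, matching the hypothesis of Theorem~\ref{binomial_J_thm}.

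Next I would analyse the coordinates where $\ab$ and $\bb$ differ. Since $J_{H,\Fr(H)+n_1}$ is homogeneous, $\deg_H\ab=\deg_H\bb=\Fr(H)+n_1$, while $\ab\neq\bb$. Consequently neither of the sets $G=\{j:a_j>b_j\}$ and $L=\{j:a_j<b_j\}$ can be empty: an empty $G$ (resp.\ $L$) would force $\deg_H\ab<\deg_H\bb$ (resp.\ the reverse strict inequality). Because $a_1=b_1=0$, both $G$ and $L$ are contained in $\{2,3,4\}$, so $1\le|G|,|L|$ and $|G|+|L|\le 3$. As $J=X^{\ab}+X^{\bb}$ is unchanged under interchanging the two monomials, I may assume $|G|\le|L|$; this forces $|G|=1$ and leaves exactly the two possibilities $(|G|,|L|)=(1,1)$ and $(|G|,|L|)=(1,2)$.

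Finally I would feed each possibility into Theorem~\ref{binomial_J_thm}, permuting the variables so that the equal coordinates come first, then the single index of $G$, then the indices of $L$; with $s$ the number of equal coordinates (including index $1$) and $s'=s+|G|=s+1$, this is exactly the coordinate pattern required there. When $(|G|,|L|)=(1,2)$, the three indices $2,3,4$ all differ, so $s=1$, $s'=2=s+1$ and $s'<e-1=3$, whence Theorem~\ref{binomial_J_thm} gives $\mu(\Ann_S(J)/(x_1))=2e-s-2=5$. When $(|G|,|L|)=(1,1)$, one index of $\{2,3,4\}$ is an equality index, so $s=2=e-2$, which forces $s'=e-1=3$, and the theorem gives $\mu(\Ann_S(J)/(x_1))=2e-s-3=e-1=3$. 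Since these are the only cases, the ideal is minimally generated by $3$ or $5$ elements, as claimed.

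The only delicate points are bookkeeping rather than computation: verifying $a_1=b_1=0$ via the Frobenius argument (so that cutting by $x_1$ is correct), and checking that after the harmless swap $\ab\leftrightarrow\bb$ and a permutation of variables the two monomials can always be arranged into the precise pattern that Theorem~\ref{binomial_J_thm} requires. Once the ordering is matched, the generator counts are immediate from that theorem, and no explicit determination of $\Ann_S J$ is needed.
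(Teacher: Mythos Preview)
Your proof is correct and follows exactly the route the paper intends: the paper itself offers no argument beyond the sentence ``In case of $e=4$, the following corollary follows from Theorem~\ref{binomial_J_thm},'' and you have supplied the case analysis that makes this citation precise. Your observations that $a_1=b_1=0$, that $G$ and $L$ are both nonempty subsets of $\{2,3,4\}$, and that (after the harmless swap and a permutation) the two possibilities $(|G|,|L|)=(1,2)$ and $(1,1)$ land respectively in the clauses ``$s'=s+1$, $s'<e-1$'' and ``$s=e-2$'' of Theorem~\ref{binomial_J_thm} are exactly what is needed. One cosmetic point: the stated hypothesis of Theorem~\ref{binomial_J_thm} reads $1<s$, but the paper's own application to $e=3$ (and your case $s=1$) shows this is a typo for $1\le s$; your use of $s=1$ is consistent with the theorem's proof and conclusion.
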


The following Theorem, combined with Theorem 
\ref{binomial_J_thm} gives a proof of Bresinsky's Theorem 
\cite{Br} that $\mu(I_H)= 3$ or $5$ if $H$ is symmetric and 
$e=4$.

\begin{thm}\label{F(H)+n_i} If 
$H=\langle n_1, n_2, n_3, n_4\rangle$ is symmetric which is not a 
complete intersection, then for some $i$, $\Fr(H) + n_i$ has exactly 
$2$ different factorization and hence $J_{H,\Fr(H)+n_i}$ is a sum of 
$2$ monomials. 
\end{thm}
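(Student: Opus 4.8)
The plan is to translate the statement into a counting problem about factorizations. By Theorem~\ref{I_H+(x_i)} the inverse polynomial $J_{H,\Fr(H)+n_i}$ generates the inverse system of the Artinian Gorenstein ring $k[H]/(t^{n_i})$, and the number of monomials occurring in it equals the number $r_i$ of factorizations of $\Fr(H)+n_i$ (each such factorization automatically avoids $n_i$, since $\Fr(H)=(\Fr(H)+n_i)-n_i\notin H$). The first step is to record that $r_i\ge 2$ for all $i$: if some $\Fr(H)+n_i$ had a unique factorization, then $J_{H,\Fr(H)+n_i}$ would be a monomial, and Theorem~\ref{monomial} would force $H$ to be free, hence a complete intersection, contrary to hypothesis. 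Thus the theorem reduces to showing that $r_i=2$ is attained for at least one $i$; equivalently, that the four integers $r_1,r_2,r_3,r_4$ cannot all be $\ge 3$.

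For the main step I would argue by contradiction, assuming $r_i\ge 3$ for every $i$, and exploit that the four rings $A_i:=k[H]/(t^{n_i})\cong S_i/I_HS_i$ are simultaneously Artinian Gorenstein quotients of the three-variable rings $S_i=S/(x_i)$, all sharing the same minimal number of relations $\mu(I_H)=\mu(I_HS_i)$ (Definition~\ref{defk[H]}(4)); being Gorenstein of grade $3$, this common number is odd and, as $H$ is not a complete intersection, at least $5$. The mechanism I would use to reach a contradiction is a count of the minimal generators of $\Ann_{S_i}(J_{H,\Fr(H)+n_i})$ in the spirit of Lemma~\ref{binomial_J_lemma}: when the socle generator is a sum of $r_i$ monomials, its annihilator needs, beyond the monomial generators that cap the three variables $x_j$ ($j\ne i$), about $r_i-1$ genuinely binomial (non-monomial) generators to collapse the $r_i$ monomials into the one-dimensional socle. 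Hence $r_i\ge 3$ for all four $i$ would force each $A_i$ to carry several independent binomial relations near the top, and assembling these across the four reductions of the single grade-$3$ Gorenstein ideal $I_H$ on four variables should over-determine its self-dual (Buchsbaum-Eisenbud) resolution, giving the contradiction.

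The hard part is precisely this over-determination count: making rigorous the passage from \emph{many monomials in the socle generator} to \emph{too many minimal relations}, because the grading by $\deg_H$ is nonstandard and the monomials of $J_{H,\Fr(H)+n_i}$ may have different orders, so the naive estimate of $r_i-1$ binomial generators must be replaced by a careful determination of which differences $x^{\ab}-x^{\bb}$ of factorizations are minimal generators rather than consequences of lower-degree ones. I would control this by following the explicit binomial annihilator description of Lemma~\ref{binomial_J_lemma} and Theorem~\ref{binomial_J_thm} monomial by monomial; should the direct count resist, the fallback is the explicit computation of $J_{H,\Fr(H)+n_i}$ for symmetric $H$ with $e=4$ carried out in~\S7 together with Bresinsky's cyclic form of the five defining binomials, from which an index $i$ with exactly two factorizations can be read off directly. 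In either route the conclusion then feeds into Corollary~\ref{J=m+m'}: the resulting binomial $J_{H,\Fr(H)+n_i}$ gives $\mu(I_H)\in\{3,5\}$, and the non-complete-intersection hypothesis excludes $3$, yielding Bresinsky's $\mu(I_H)=5$.
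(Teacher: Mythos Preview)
Your proposal has a genuine gap rather than a complete argument. The primary route---the ``over-determination count''---is never actually carried out: you identify it yourself as the hard part and leave it as a heuristic. The claim that a socle generator which is a sum of $r_i$ monomials forces roughly $r_i-1$ binomial minimal generators of the annihilator is not justified, and the tools you invoke do not supply it: Lemma~\ref{binomial_J_lemma} and Theorem~\ref{binomial_J_thm} treat only the case of a \emph{binomial} $J$ (a sum of exactly two monomials) and say nothing about $r_i\ge 3$. There is no mechanism in your sketch that converts ``$r_i\ge 3$ for all $i$'' into a bound on $\mu(I_H)$ that contradicts the odd Buchsbaum--Eisenbud count.

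Your fallback is circular in the logic of the paper. Section~\ref{4Gor} (\S7) quotes the full Bresinsky structure theorem for the five generators and the skew-symmetric matrix; but the whole point of \S6, and of Theorem~\ref{F(H)+n_i} in particular, is to \emph{prove} Bresinsky's theorem via inverse polynomials (Theorem~\ref{F(H)+n_i} feeds into Corollary~\ref{J=m+m'} to conclude $\mu(I_H)\in\{3,5\}$). Reading the answer off from \S7 assumes what is to be shown.

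For comparison, the paper's proof does not argue by contradiction over all four $r_i$. It uses two structural inputs that predate the full Bresinsky theorem: Bresinsky's lemma that the $\alpha_in_i$ are pairwise distinct (reproved here as Lemma after Theorem~\ref{F(H)+n_i}), and the RF-matrix description from \cite{Et} giving explicit factorizations of each $\Fr(H)+n_i$. After reordering so that $\alpha_4n_4$ is maximal, one has $\alpha_in_i\notin\Ap(H,n_1)$ for $i=2,3$, hence $\alpha_in_i\not\le_H \Fr(H)+n_1$; combined with the inequality $(\alpha_2-1)n_2+(\alpha_3-1)n_3<2\alpha_4n_4$, this pins down exactly two factorizations of $\Fr(H)+n_1$. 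The argument singles out one index and bounds factorizations directly, rather than attempting a global relation-count.
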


\begin{proof}

Since $H$ is not a complete intersection,
$\Fr(H)+n_i$ does not have UF for each $i$.
By \cite{Br}, we may assume
$\alpha_in_i\ne\alpha_{i'}n_{i'}$ if $i\ne i'$.
By  \cite[Example 4.3]{Et}, we may assume
\[\alpha_4n_4\in\Ap(S, n_1),\quad
\alpha_{i-1}n_{i-1}\in\Ap(S, n_i)\  \text{for}\  i=2, 3, 4,
\]
and
\begin{align*}
\Fr(H)+n_1&=(\alpha_2-1)n_2+(\alpha_3-1)n_3+a_{14}n_4\\
\Fr(H)+n_2&=a_{21}n_1+(\alpha_3-1)n_3+(\alpha_4-1)n_4\\
\Fr(H)+n_3&=(\alpha_1-1)n_1+a_{32}n_2+(\alpha_4-1)n_4\\
\Fr(H)+n_4&=(\alpha_1-1)n_1+(\alpha_2-1)n_2+a_{43}n_3,
\end{align*}
where $0<a_{14}<\alpha_4$, $0<a_{(i-1)i}<\alpha_i$ for $i>1$
(Note that the above equalities follow from an RF-matrix RF$(S)$ for $\Fr(H)$
in \cite[Example 4.3]{Et}).
Without loss of generality,
we may assume $\alpha_in_i<\alpha_4n_4$ for $i=1, 2, 3$.
Then
\[
(\alpha_2-1)n_2+(\alpha_3-1)n_3<2\alpha_4n_4.
\]
Note $\alpha_in_i\notin\Ap(H, n_l)$ if $l\ne i+1$ for $i=2, 3$,
since 
$\alpha_in_i\ne\alpha_{i'}n_{i'}$ if $i\ne i'$.
Thus 
 $\alpha_in_i\not\leq_H\Fr(H)+n_1$ for $i=2, 3$.
From $\Fr(H)+n_1-a_{14}n_4<2\alpha_4n_4$,
it follows that
$\Fr(H)+n_1$ has exactly two factorizations:
\begin{align*}
\Fr(H)+n_1&=(\alpha_2-1)n_2+(\alpha_3-1)n_3+a_{14}n_4\\
&=a_{32}n_2+(\alpha_3-a_{43})n_3+(\alpha_4+a_{14})n_4.
\end{align*}
\end{proof}

We give another proof of Brsinsky's lemma,
which says $\alpha_in_i\ne\alpha_{i'}n_{i'}$ if $i\ne i'$.
To prove it, we give lemma in \cite{Et}.

\begin{lem}[{\cite[Lemma 4.1]{Et}}]\label{nUF}
Let $H=\langle n_1, n_2, n_3, n_4\rangle$.
Assume that
$\Fr(H)+n_i$ does not have UF for any $i$.
Then
\begin{enumerate}
\item
for each $i$, there is $j$ with $\alpha_jn_j\in\Ap(H, n_i)$,
\item
if $\alpha_jn_j, \alpha_{j'}n_{j'}\in\Ap(H, n_i)$,
then $\alpha_jn_j=\alpha_{j'}n_{j'}$,
\item
for each $i$, 
the number of $j$ satisfying $\alpha_jn_j\in\Ap(H, n_i)$
is at most two,
\item
Assume $\alpha_jn_j\in\Ap(H, n_i)$.
Then there exists a unique factorization of 
$\Fr(H)+n_i=\sum_{l\ne i}\mu_ln_l$
under the conditions $\mu_l\geq0$ for each $l$ and $\mu_j<\alpha_j$.
\item
by suitable change of the order of $n_1, \dots, n_4$,
we have two cases:
\begin{enumerate}
\def\labelenumii{(nUF\arabic{enumii})}
\item
$\alpha_4n_4\in\Ap(H, n_1)$,
$\alpha_in_i\in\Ap(H, n_{i+1})$
for $i=1, 2, 3$,
\item
$\alpha_jn_j\in\Ap(H, n_i)$ 
for $\{i, j\}=\{1, 4\}$ or $\{2, 3\}$.
\end{enumerate}
\end{enumerate}
\end{lem}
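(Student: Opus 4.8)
The plan is to convert the statement entirely into the combinatorics of Apéry sets and then to run one comparison mechanism repeatedly. First I would set up the dictionary. For any $m\in\Ap(H,n_i)$ no factorization of $m$ can use $n_i$ (otherwise $m-n_i\in H$), and the $\le_H$-downset of $\Ap(H,n_i)$ is contained in $\Ap(H,n_i)$; moreover $\Fr(H)+n_i\in\Ap(H,n_i)$ is a $\le_H$-maximal element, the maximal elements being exactly $\{f+n_i : f\in\PF(H)\}$. Using Theorem~\ref{I_H+(x_i)} together with Lemma~\ref{mJne0} I would record the criterion
$\alpha_jn_j\in\Ap(H,n_i)\iff x_j^{\alpha_j}\notin I_H+(x_i)\iff f+n_i-\alpha_jn_j\in H$ for some $f\in\PF(H)$, and the fact that $\alpha_jn_j\in\Ap(H,n_i)$ forces $\alpha_{ji}=0$ in the relation $\alpha_jn_j=\sum_{l\ne j}\alpha_{jl}n_l$ (else $\alpha_jn_j-n_i\in H$), so such a relation expresses $\alpha_jn_j$ without either $n_i$ or $n_j$.

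The engine, which I would isolate first, is this: two distinct factorizations of a single element, both avoiding $n_i$, differ by a relation whose positive and negative supports $P,N$ are disjoint subsets of the three indices $l\ne i$; if either support is a singleton $\{l\}$, then $c_ln_l$ is representable without $n_l$, hence $c_l\ge\alpha_l$. I would apply this directly to part $(1)$: if no $\alpha_ln_l$ lay in $\Ap(H,n_i)$, then $x_l^{\alpha_l}\in I_H+(x_i)$ for every $l\ne i$, so any factorization of $\Fr(H)+n_i$ with some multiplicity $\ge\alpha_l$ would send $t^{\Fr(H)+n_i}$ to $0$ in $k[H]/(t^{n_i})$, contradicting $\Fr(H)+n_i\in\Ap(H,n_i)$; thus every factorization is reduced, and two distinct reduced factorizations would have disjoint supports $P,N$ with $|P|,|N|\ge2$ inside a $3$-element index set, which is impossible, so $\Fr(H)+n_i$ would have unique factorization (UF), against the hypothesis. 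For part $(4)$, existence comes from repeatedly rewriting a factorization of $\Fr(H)+n_i$ with the $n_i$-free relation $f_j$ to push the $n_j$-multiplicity below $\alpha_j$; for uniqueness, the engine discards all cases except a single surviving configuration, namely a pure two-generator relation $p\,n_a=q\,n_b$ between the two indices other than $i,j$, which must be excluded separately.

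For parts $(2)$ and $(3)$ I would analyse coincidences of the elements $\alpha_jn_j$ inside one $\Ap(H,n_i)$. If $\alpha_an_a,\alpha_bn_b\in\Ap(H,n_i)$ with $a\ne b$, both defining relations are $n_i$-free; matching them and invoking minimality of the $\alpha$'s forces $\alpha_an_a=\alpha_bn_b$, which is $(2)$. For $(3)$, three such elements $\alpha_an_a,\alpha_bn_b,\alpha_cn_c$ (all indices $\ne i$) would coincide by $(2)$, giving a single element with three pure-power factorizations; this drives $H$ into the equal-degree complete-intersection situation of Proposition~\ref{monomial_strong}, where some $\Fr(H)+n_{i'}$ has UF, contradicting the hypothesis. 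Hence at most two indices occur. I would prove $(2)$ and $(3)$ before $(4)$ so that $(2)$ is available to exclude the leftover pure relation, and I would keep this non-circular by deriving $(2)$ from direct relation-matching rather than from the global Bresinsky inequality.

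Finally, part $(5)$ is bookkeeping on the graph with an arrow $i\to j$ whenever $\alpha_jn_j\in\Ap(H,n_i)$: by $(1)$--$(3)$ every vertex has out-degree $1$ or $2$, there are no loops, and two arrows out of $i$ carry equal values $\alpha_jn_j$. Classifying such graphs on four vertices up to relabeling yields precisely the cyclic pattern (nUF1) and the two-transposition pattern (nUF2). The step I expect to be the main obstacle is \emph{not} $(1)$ or $(5)$ but the uniqueness half of $(4)$ in concert with $(2)$: the one configuration the singleton bound cannot kill by itself is the pure relation $p\,n_a=q\,n_b$ between the indices other than $i,j$, and ruling it out---without invoking the very Bresinsky lemma this result is meant to reprove---is exactly where $(2)$ must be fed back in. Arranging $(2)$, $(3)$ and the uniqueness in $(4)$ so that no circularity creeps in is the delicate point of the whole argument.
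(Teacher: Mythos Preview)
The paper does not prove this lemma: it is quoted from \cite[Lemma~4.1]{Et} and used as a black box in the proof of the subsequent Bresinsky-type lemma. There is therefore no proof in the paper to compare your attempt against.

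As for the attempt itself, your treatment of $(1)$ and of the existence half of $(4)$ is correct and standard. Two steps, however, do not go through as written.

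\emph{Part $(3)$.} You invoke Proposition~\ref{monomial_strong}, but that proposition characterises the situation where $J_{H,\Fr(H)+n_l}$ is a monomial for \emph{every} $l$, i.e.\ where all four values $\alpha_l n_l$ coincide. From three coincidences $\alpha_a n_a=\alpha_b n_b=\alpha_j n_j=:d$ (the indices $\ne i$) you have not produced the fourth, so the proposition does not apply. The repair is short but different: since $d=\alpha_a n_a$ one has $d-n_a=(\alpha_a-1)n_a\in H$, so $\alpha_b n_b=\alpha_j n_j=d\notin\Ap(H,n_a)$; by part $(1)$ applied to $n_a$ this forces $\alpha_i n_i\in\Ap(H,n_a)$, and symmetrically $\alpha_i n_i\in\Ap(H,n_b)\cap\Ap(H,n_j)$. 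But the minimal relation $\alpha_i n_i=\sum_{l\ne i}\alpha_{il}n_l$ has some $\alpha_{il}>0$, whence $\alpha_i n_i-n_l\in H$, a contradiction.

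\emph{Part $(2)$.} ``Matching them and invoking minimality'' is not yet a proof. With $\{a,b,j\}=\{1,2,3,4\}\setminus\{i\}$ the two $n_i$-free relations read $\alpha_a n_a=\alpha_{aj}n_j+\alpha_{ab}n_b$ and $\alpha_b n_b=\alpha_{bj}n_j+\alpha_{ba}n_a$; one must run a short case analysis (on whether $\alpha_{ab}\ge\alpha_b$, $\alpha_{ba}>0$, etc.) and substitute one into the other before minimality of $\alpha_a,\alpha_b$ can bite. In the boundary sub-cases (e.g.\ $\alpha_{ba}=0$) one is thrown back onto a three-coincidence situation and needs the argument for $(3)$ above. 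This is all routine, but as stated your step is an assertion, not an argument.

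\emph{Part $(5)$.} Calling this ``bookkeeping'' understates it. Out-degree $\ge1$ (by $(1)$), out-degree $\le2$ (by $(3)$) and no loops are immediate, but to extract a fixed-point-free permutation (hence a $4$-cycle or two $2$-cycles) you also need in-degree information: in-degree $\le 2$ (because the minimal relation for $\alpha_j n_j$ uses some $n_l$, so $l\not\to j$), together with the constraint from $(2)$ that two outgoing arrows from the same vertex hit generators with equal $\alpha$-value, which then cannot point to each other. Only after these observations does the graph classification reduce to the two patterns (nUF1) and (nUF2).
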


\begin{lem}[{\cite[Lemma 2]{Br}}]
If $H=\langle n_1, n_2, n_3, n_4\rangle$ is symmetric and not a complete intersection,
then
$\alpha_in_i\ne\alpha_{i'}n_{i'}$. 
\end{lem}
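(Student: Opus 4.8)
The plan is to assume the contrary, $\alpha_i n_i=\alpha_{i'}n_{i'}$ for some $i\ne i'$, and to derive that $H$ is a complete intersection, against the hypothesis. First I would record that the hypotheses of Lemma~\ref{nUF} are met: since $H$ is symmetric but not a complete intersection it is not free, so by Theorem~\ref{monomial} the number $\Fr(H)+n_k$ fails to have a unique factorization for every $k$, which is exactly the standing assumption of Lemma~\ref{nUF}. I would also note the elementary fact that a common value $d=\alpha_i n_i=\alpha_{i'}n_{i'}$ is forced to equal $\lcm(n_i,n_{i'})$, with $\alpha_i=n_{i'}/\gcd(n_i,n_{i'})$ and $\alpha_{i'}=n_i/\gcd(n_i,n_{i'})$, because $d$ is a common multiple of $n_i,n_{i'}$ while $\alpha_i,\alpha_{i'}$ are minimal.

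Next I would put the generators into one of the two normal forms of Lemma~\ref{nUF}(5) and isolate which pairs can possibly be equal. The key observation is an Apéry obstruction: if $\alpha_i n_i\in\Ap(H,n_j)$, then $\alpha_i n_i=\alpha_j n_j$ is impossible, since it would give $\alpha_i n_i-n_j=(\alpha_j-1)n_j\in H$, contradicting membership in the Apéry set. In case (nUF1), where $\alpha_k n_k\in\Ap(H,n_{k+1})$ cyclically, this rules out $i'\equiv i\pm1\pmod 4$, leaving only the two cyclically opposite pairs, say $\alpha_1 n_1=\alpha_3 n_3$; in case (nUF2) the same obstruction eliminates every pair linked by a membership, and the surviving pairs are treated analogously to the step below.

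The heart of the argument handles an opposite pair, say $\alpha_1n_1=\alpha_3n_3=d$ in (nUF1). Writing $f_1=x_1^{\alpha_1}-M_1$ with $M_1=\prod_{l\ne1}x_l^{\alpha_{1l}}$, I would use the two memberships $\alpha_1n_1\in\Ap(H,n_2)$ and $\alpha_3n_3\in\Ap(H,n_4)$ together with $d=\alpha_1n_1=\alpha_3n_3$: since $d-n_2\notin H$ and $d-n_4\notin H$, neither $x_2$ nor $x_4$ can divide $M_1$, forcing $M_1=x_3^{\alpha_3}$, and symmetrically $M_3=x_1^{\alpha_1}$. Hence $f_1=x_1^{\alpha_1}-x_3^{\alpha_3}=-f_3$, so among the four canonical binomials $f_1,\dots,f_4$ only three remain distinct. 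Passing to $S_1=S/(x_1)$ and invoking Theorem~\ref{I_H+(x_i)} in the symmetric case, the ideal $\bar I=(I_H+(x_1))/(x_1)=\Ann_{S_1}(J_{H,\Fr(H)+n_1})$ is Artinian Gorenstein of length $n_1$; modulo $x_1$ the three surviving binomials already generate an Artinian complete intersection $C\subseteq\bar I$ (the pure power $x_3^{\alpha_3}$ coming from $f_1$). A length/socle comparison then forces $\bar I=C$, so $\mu(I_H)=\mu(\bar I)=3$ by Definition~\ref{defk[H]} and $H$ is a complete intersection, the desired contradiction.

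The step I expect to be the main obstacle is precisely this last equality $\bar I=C$, i.e.\ showing that no further minimal generator of $I_H$ survives modulo $x_1$, equivalently that $n_1=\alpha_2\alpha_3\alpha_4=\dim_k S_1/C$. My plan to close it is graded socle matching: $S_1/C$ is a graded Gorenstein complete intersection with socle degree $(\alpha_2-1)n_2+(\alpha_3-1)n_3+(\alpha_4-1)n_4$, while $S_1/\bar I$ is Gorenstein with socle degree $a(k[H]/(t^{n_1}))=\Fr(H)+n_1$ by Proposition~\ref{Inv-polyn}; since $C\subseteq\bar I$ and every nonzero ideal of an Artinian Gorenstein ring meets the socle, $\bar I\ne C$ would strictly lower the socle degree, and I would rule this out by showing the two socle degrees coincide. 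Handling (nUF2) and the remaining non-linked pairs proceeds by the same pure-binomial mechanism, so the whole lemma reduces to this one length computation.
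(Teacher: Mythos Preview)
Your observation that the Ap\'ery memberships from Lemma~\ref{nUF}(5) force $f_i=x_i^{\alpha_i}-x_{i'}^{\alpha_{i'}}=-f_{i'}$ whenever $\alpha_in_i=\alpha_{i'}n_{i'}$ is correct and pleasant. However, the argument has two genuine gaps that the paper's proof avoids by taking a different route.

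First, the double--equality case is not covered. In (nUF1) your obstruction leaves both opposite pairs $\{1,3\}$ and $\{2,4\}$ available simultaneously, and nothing you have written excludes $\alpha_1n_1=\alpha_3n_3$ together with $\alpha_2n_2=\alpha_4n_4$. When both hold, the same mechanism gives $\bar f_2=-\bar f_4$ in $S_1$ as well, so your ideal $C$ is generated by only two elements and is \emph{not} Artinian; the length/socle comparison collapses. The paper treats exactly this case separately (there relabeled as $\alpha_1n_1=\alpha_2n_2$, $\alpha_3n_3=\alpha_4n_4$) by a gcd argument showing that $H$ is then a gluing of $\langle n_1/d_{12},n_2/d_{12}\rangle$ and $\langle n_3/d_{34},n_4/d_{34}\rangle$, hence a complete intersection.

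Second, and more seriously, in the single--equality case your socle--matching plan does not close. You need
\[
(\alpha_2-1)n_2+(\alpha_3-1)n_3+(\alpha_4-1)n_4=\Fr(H)+n_1,
\]
equivalently that the left side lies in $\Ap(H,n_1)$. You only get the inequality $a(S_1/\bar I)\le a(S_1/C)$ from $C\subseteq\bar I$; the reverse inequality is precisely the assertion that no extra generator of $I_H$ survives modulo $x_1$, which is what you are trying to prove. Symmetry of $H$ translates the desired statement into $\Fr(H)-\bigl((\alpha_2-1)n_2+(\alpha_3-1)n_3+(\alpha_4-1)n_4-n_1\bigr)\in H$, but that quantity is a nonpositive integer combination of $n_2,n_3,n_4$, so it lies in $H$ only when it equals $0$---again the very equality in question. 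I do not see a non-circular way to complete this step. The paper handles the single--equality case by an entirely different mechanism: it writes down the \emph{two} special factorizations of $\Fr(H)+n_3$ and $\Fr(H)+n_4$ coming from Lemma~\ref{nUF}(4), subtracts them using $\alpha_3n_3=\alpha_4n_4$ to force the remaining coefficients to be $\alpha_i-1$, and then exhibits an explicit element that simultaneously lies in and outside $\Ap(H,n_3)$, a direct contradiction. That comparison of two Ap\'ery factorizations is the missing idea your approach lacks.
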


\begin{proof}
By Lemma~\ref{nUF} (5),
we may assume $\alpha_4n_4\in\Ap(H, n_1)$.
Assume $\alpha_1n_1=\alpha_2n_2$
and $\alpha_3n_3=\alpha_4n_4$.
Since $H$ is symmetric and since $(\alpha_2-1)n_2\in\Ap(H, n_1)$,
we have an factorization
\[
\Fr(H)+n_1=(\alpha_2-1)n_2+\mu_3n_3+\mu_4n_4,
\]
where $\mu_3\geq0$ and $0\leq\mu_4<\alpha_4$
by Lemma~\ref{nUF} (4).
From $\alpha_3n_3=\alpha_4n_4$,
we $\mu_3\geq\alpha_3$.

Put $d_{12}=\gcd(n_1, n_2)$ and $d_{12}=\gcd(n_1, n_2)$.
Suppose $d_{12}d_{34}\notin\NN n_1+\NN n_2$
Since $d_{12}d_{34}\in\Z n_1+\Z n_2$,
there are $\nu_1, \nu_2\in\Z$
satisfying
$d_{12}d_{34}=\nu_1n_1-\nu_2n_2$,
and $0<\nu_i<\alpha_i$ for $i=1, 2$.
If $d_{12}d_{34}\in\NN n_3+\NN n_4$,
then
$\nu_1n_1\in\NN n_2+\NN n_3+\NN n_4$,
this cotradicts to $\nu_1<\alpha_1$.
If $d_{12}d_{34}\notin\NN n_3+\NN n_4$,
then
we also have an expression
$d_{12}d_{34}=\nu_3n_3-\nu_4n_4$
with $0<\nu_i<\alpha_i$ for $i=3, 4$.
Then $\nu_2n_2+\nu_3n_3\notin\Ap(H, n_1)$
and this contradicts to $(\alpha_2-1)n_2+\alpha_3n_3\in\Ap(H, n_1)$.
Therefore
we obtain 
$d_{12}d_{34}\in\NN n_1+\NN n_2$
Similarly, 
$d_{12}d_{34}\in\NN n_3+\NN n_4$
Hence $H$ is glued by $\langle n_1/d_{12}, n_2/d_{12}\rangle$
and $\langle n_3/d_{34}, n_4/d_{34}\rangle$,
hence $I(H)$ is a complete intersection.

Assume $\alpha_1n_1\ne\alpha_2n_2$
and $\alpha_3n_3=\alpha_4n_4$.
Then we may assume
$\alpha_2n_2\in\Ap(H, n_3)$ 
and
$\alpha_1n_1\in\Ap(H, n_4)$. 
Thus
$\alpha_1n_1\notin\Ap(H, n_3)$ 
and
$\alpha_2n_2\notin\Ap(H, n_4)$.
By Lemma~\ref{nUF} (4),
\begin{align*}
\Fr(H)+n_3&=(\alpha_1-1)n_1+\mu_2n_2+(\alpha_4-1)n_4\\
\Fr(H)+n_4&=\mu_1n_1+(\alpha_2-1)n_2+(\alpha_3-1)n_3,
\end{align*}
where $0\leq\mu_i<\alpha_i$ for $i=1, 2$.
Then
\[
(\alpha_1-1-\mu_1)n_1=(\alpha_2-1-\mu_2)n_2
\]
and $\mu_i=\alpha_i-1$ for $i=1, 2$.
We also have factorizations
\[
\alpha_1n_1=\nu_2n_2+\nu_3n_3, \qquad
\alpha_2n_2=\nu_1n_1+\nu_4n_4,
\]
where
$0<\nu_i<\alpha_i$ for each $i$
and
\[
(\alpha_1-\nu_1)n_1+(\alpha_2-\nu_2)n_2=\nu_3n_3+\nu_4n_4
\notin\Ap(H, n_i)
\]
for $i=3, 4$.
Hence
\[
(\alpha_1-1)n_1+(\alpha_2-1)n_2\notin\Ap(H, n_3)
\]
and this cotradicts
to $(\alpha_1-1)n_1+(\alpha_2-1)n_2\leq_H\Fr(H)+n_3\in\Ap(H, n_3)$.
Therefore, this case is impossible.
By Lemma~\ref{nUF} (3),
we conclude $\alpha_in_i\ne\alpha_{i'}n_{i'}$ if $i\ne i'$.
\end{proof}

\section{Inverse polynomial of symmetric semigroups generated by $4$ elements.}

In this section, let $H=\langle n_1,\ldots ,n_4\rangle$ be a 
{\bf symmetric} numerical semigroup generated by $4$ elements, 
{\bf which is not a complete intersection.}
We will determine the shape of $J_{H, \Fr(H)+n_i}$ for such $H$.
For that purpose, we recall the structure of such $H$ and $k[H]$.

\begin{thm}\label{4Gor}(\cite{Br}, \cite{BFS}, \cite{W20}) 
If $H=\langle n_1,\ldots ,n_4\rangle$ is symmetric and not a complete intersection, then $I_H = (f_1,\ldots , f_5)$ and we can choose a 
minimal free resolution of $k[H]$ over $S$ 
\[F_{\bullet} =[\; 0\to F_3 \to F_2 \overset{d_2}{\to} F_1 \overset{d_1}{\to} F_0 =S\to k[H] \to0\; ]\] 
so that we have the following properties;
\begin{enumerate}
\item The map $d_2$ is given by the following skew-symmetric matrix;
\[ \bbM = \left(\begin{array}{ccccc} 0 & -x_3^{\al_{43}} & 0 & -x_2^{\al_{32}} & - x_4^{\al_{24}}\\
x_3^{\al_{43}} & 0 & x_4^{\al_{14}} & 0 & -x_1^{\al_{31}}\\
0 &  -x_4^{\al_{14}} & 0 & -x_1^{\al_{21}} & - x_2^{\al_{42}}\\
x_2^{\al_{32}} & 0 & x_1^{\al_{21}} & 0 & -x_3^{\al_{13}} \\
x_4^{\al_{24}} & x_1^{\al_{31}} & x_2^{\al_{42}} & x_3^{\al_{13}} & 0
\end{array}\right).
   \]
\item If $\al_i$ ($1\le i \le 4$) is as is defined in \ref{defH} (4), 
then we have 
\[\al_1=\al_{21}+\al_{31}, \al_2=\al_{32}+\al_{42},
\al_3=\al_{13}+\al_{43}, \al_4=\al_{24}+\al_{14},\]
where every $\al_{ij}$ is a positive integer.
\item $f_i$ is the Paffian of the skew-symmetric $4\times 4$ matrix $\bbM(i)$
obtained by deleting $i$-th row and $i$-th column of $\bbM$.Hence we have;
\[f_1=x_1^{\al_1}-x_3^{\al_{13}}x_4^{\al_{14}}, 
f_2= x_2^{\al_2}-x_4^{\al_{24}}x_1^{\al_{21}},
f_3= x_3^{\al_3}-x_1^{\al_{31}}x_2^{\al_{32}},\]
\[f_4= x_4^{\al_4}-x_2^{\al_{42}}x_3^{\al_{43}},
f_5= x_1^{\al_{21}}x_3^{\al_{43}}-x_2^{\al_{32}}x_4^{\al_{14}}.\]
\item Since $\dim_k S/(I_H + (x_i)) = n_i$ for $1\le i \le 4$, we have 
\[
\] 

\item Putting $N = \sum_{i=1}^4 n_i$, since $F_3\cong S( - \Fr(H) - N)$,
if $(i,j)$ entry $\bbM_{i,j}$ is not $0$, then we have
\[\Fr(H) + N = \deg \bbM_{i,j} + \deg f_i + \deg f_j.\]   
\item If we choose expressions of $\Fr(H) + N$ which does not contain,   
say, $n_1$ from (5), putting $(i,j)=(1,2)$ (resp. $(1,4)$), 
we have 
\[\begin{array}{lcl}
\Fr(H) + n_1 &= & (\al_2-1)n_2 + (\al_3-1)n_3 + (\al_{14}-1)n_4\\
 & = &  (\al_{32}-1)n_2 + (\al_{13}-1)n_3 + (\al_4+\al_{14}-1)n_4.
 \end{array}\]
\item If the monomial $X_2^{\al_{42}}X_3^{\al_{43}}$ does not divide 
$X_2^{\al_{32}-1}X_3^{\al_{13}-1}$, then we have 
\[\begin{array}{lcl}
J_{\Fr(H) + n_1} &= &X_2^{\al_2-1}X_3^{\al_3-1}X_4^{\al_{14}-1}
+ X_2^{\al_{32}-1}X_3^{\al_{13}-1}X_4^{\al_4+\al_{14}-1}\\
& = &X_2^{\al_{32}-1}X_3^{\al_{13}-1}X_4^{\al_{14}-1}(X_4^{\al_4}+
X_2^{\al_{42}}X_3^{\al_{43}}).
\end{array} \]
Otherwise we have 
\[J_{\Fr(H) + n_1} = \sum_{k \ge 0, \al_{32}-1-k\al_{42}\ge 0, 
\al_{13}-1-k\al_{43}\ge 0}X_2^{\al_{32}-1-k\al_{42}}X_3^{\al_{13}-1-k\al_{43}}
X_4^{(k+1)\al_4}.  \]

In fact, if $\Fr(H) + n_1= \sum_{i=2}^4 \beta_in_i$ is another 
factorization of $\Fr(H) +n_1$ other than the ones shown in (6), 
we must have $x_2^{\al_2-1}x_3^{\al_3-1}x_4^{\al_{14}-1} 
- x_2^{\beta_2}x_3^{\beta_3}x_4^{\beta_4}\in I_H$ and hence 
$x_2^{\al_2-1}x_3^{\al_3-1}x_4^{\al_{14}-1} 
- x_2^{\beta_2}x_3^{\beta_3}x_4^{\beta_4}$ must be divisible by 
$x_4^{\al_4} - x_2^{\al_{42}}x_3^{\al_{43}}$.
\end{enumerate}
\end{thm}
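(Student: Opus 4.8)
The plan is to separate the classical structure part (1)--(5) from the new inverse-polynomial computation (6)--(7). For (1)--(5) I would invoke the Buchsbaum--Eisenbud structure theorem for codimension-three Gorenstein ideals, as specialized to numerical semigroups by Bresinsky \cite{Br} (see also \cite{BFS}, \cite{W20}): $I_H$ is generated by the five $4\times4$ Pfaffians of a skew-symmetric matrix $\bbM$, the resolution is self-dual, and comparing its last twist with the $a$-invariant $\Fr(H)$ of $k[H]$ forces $F_3\cong S(-\Fr(H)-N)$ with $N=\sum_{i=1}^4 n_i$; positivity of every entry and the relations $\al_1=\al_{21}+\al_{31}$, etc.\ are read off from the requirement that the $f_i$ be the \emph{minimal} binomial relations of \ref{defH}(4). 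I take these as given.

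For (6) I would simply evaluate the degree identity of (5), $\Fr(H)+N=\deg\bbM_{i,j}+\deg f_i+\deg f_j$, on the two entries $\bbM_{1,2}=-x_3^{\al_{43}}$ and $\bbM_{1,4}=-x_2^{\al_{32}}$. Writing each $\deg f_i$ through the term of $f_i$ \emph{not} involving $n_1$ (that is $\deg f_1=\al_{13}n_3+\al_{14}n_4$, $\deg f_2=\al_2 n_2$, $\deg f_4=\al_4 n_4$), the $(1,2)$-entry gives $\Fr(H)+N=\al_2 n_2+\al_3 n_3+\al_{14}n_4$ and the $(1,4)$-entry gives $\Fr(H)+N=\al_{32}n_2+\al_{13}n_3+(\al_4+\al_{14})n_4$; subtracting $n_2+n_3+n_4$ from each yields the two claimed factorizations of $\Fr(H)+n_1$. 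The only point to verify is that these are indeed the $n_1$-free expansions, which is clear from the binomial shape of the $f_i$ in (3).

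The substance is (7). Here I would first apply Theorem \ref{I_H+(x_i)}: as $H$ is symmetric, $\Ann_S(J_{H,\Fr(H)+n_1})=I_H+(x_1)$, so every monomial of $J_{H,\Fr(H)+n_1}$ is free of $X_1$ --- equivalently, no factorization of $\Fr(H)+n_1$ can use $n_1$, since that would place $\Fr(H)$ in $H$. The decisive observation is that, of the five generators in (3), only $f_4=x_4^{\al_4}-x_2^{\al_{42}}x_3^{\al_{43}}$ is free of $x_1$; each of $f_1,f_2,f_3,f_5$ carries $x_1$ in exactly one of its two terms. Now any two factorizations of $\Fr(H)+n_1$ differ by an element of $I_H$ and so are joined by a sequence of binomial trades along the $f_i$ (the fibers of the toric map $\Phi_H$ are connected by the defining binomials); any trade invoking $f_1,f_2,f_3$ or $f_5$ would create or destroy a power of $x_1$ and hence cannot occur between $x_1$-free monomials. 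Thus all factorizations sit on a single chain whose steps are the move $x_2^{\al_{42}}x_3^{\al_{43}}\leftrightarrow x_4^{\al_4}$. Beginning at the first factorization $X_2^{\al_2-1}X_3^{\al_3-1}X_4^{\al_{14}-1}$ --- which admits no backward move because $\al_{14}-1<\al_4=\al_{24}+\al_{14}$ --- one forward step reaches the second factorization of (6); a further step is available exactly when $X_2^{\al_{42}}X_3^{\al_{43}}$ divides $X_2^{\al_{32}-1}X_3^{\al_{13}-1}$, which is precisely the dichotomy in the statement. In the non-dividing case the chain terminates and $J_{H,\Fr(H)+n_1}$ is the displayed binomial; in the dividing case iterating the move (each step lowering the $x_2,x_3$-exponents by $\al_{42},\al_{43}$ and raising the $x_4$-exponent by $\al_4$) sweeps out the remaining factorizations and assembles the stated sum over $k$.

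The step I expect to resist a one-line treatment is the ``single chain'' claim: I must rule out that the factorizations split into several $f_4$-orbits, or that connecting them genuinely requires passing through an $x_1$-bearing monomial. I would make this precise exactly as indicated at the close of the statement. Given any factorization $x_2^{\beta_2}x_3^{\beta_3}x_4^{\beta_4}$, the difference $x_2^{\al_2-1}x_3^{\al_3-1}x_4^{\al_{14}-1}-x_2^{\beta_2}x_3^{\beta_3}x_4^{\beta_4}$ lies in $I_H\cap k[x_2,x_3,x_4]$, the defining ideal of $\langle n_2,n_3,n_4\rangle$. Reducing $I_H+(x_1)$ modulo $x_1$ turns $f_1,f_2,f_3,f_5$ into the four monomials $x_3^{\al_{13}}x_4^{\al_{14}}$, $x_2^{\al_2}$, $x_3^{\al_3}$, $x_2^{\al_{32}}x_4^{\al_{14}}$, which cut out $\Ap(H,n_1)$ and therefore bound the exponents $\beta_i$; together with the single surviving binomial $x_4^{\al_4}-x_2^{\al_{42}}x_3^{\al_{43}}$ these force the above difference to be a multiple of $f_4$, so $\beta$ lies on the chain through $X_2^{\al_2-1}X_3^{\al_3-1}X_4^{\al_{14}-1}$. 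With that in hand the case split and the final indexing in $k$ are a routine induction on the number of $f_4$-steps.
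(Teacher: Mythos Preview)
Your proposal is correct and follows the same line as the paper. Parts (1)--(5) are, in the paper as in your sketch, simply quoted from \cite{Br}, \cite{BFS}, \cite{W20}; the derivation of (6) from the degree identity in (5) is identical; and for (7) the paper's entire argument is the final sentence of the statement, namely that any further $x_1$-free factorization differs from the reference one by an element of $I_H$ which must then be divisible by $f_4=x_4^{\al_4}-x_2^{\al_{42}}x_3^{\al_{43}}$. Your justification of this last point via connectivity of the fiber of $\Phi_H$ --- every monomial of degree $\Fr(H)+n_1$ is $x_1$-free since $\Fr(H)\notin H$, the generating binomials $f_1,\dots,f_5$ connect the fiber, and among them only $f_4$ can link two $x_1$-free monomials --- is exactly the mechanism behind the paper's bare assertion of divisibility, and in fact makes that step more transparent than the paper does.
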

\begin{ex}\label{4Gor-ex} (1) If $H = \left<n_1,n_2,n_3,n_4 \right>
=\left<41,99,70,53 \right>$, then $\Fr(H) = 1019$ and
$f_1 = x_1^3-x_3x_4, f_2= x_2^{11}- x_1^2x_4^{19}, f_3= x_3^2 - x_1x_2,
f_4= x_4^{20} - x_2^{10}x_3, f_5 = x_1^2x_3-x_2x_4$ and we have 
\[\al_1=3, \al_{21}= 2, \al_{31}= 1; \al_2= 11, \al_{32}= 1, \al_{42}=10,\]
\[\al_3= 2, \al_{13}=1, \al_{43}=1; \al_4 = 20, \al_{24}= 19, \al_{14}=1.\]
Thus we have from \ref{4Gor} (5), 
\[\Fr(H) + n_1 = 10n_2+n_3 = 20n_4. \] 
and since these expressions are only representations of $\Fr(H) + n_1$, 
we have $J_{H,\Fr(H)+n_1} = X_4^{20} + X_2^{10}X_3$.

(2) If $H = \left<n_1,n_2,n_3,n_4 \right>
=\left<43, 20, 27, 37 \right>$, then $\Fr(H) = 179$ and
$f_1 = x_1^4-x_5x_4, f_2= x_2^4- x_1x_4, f_3= x_3^7 - x_1^3x_2^3,
f_4= x_4^2 - x_2x_3^2, f_5 = x_1x_3^2-x_2^3x_4$ and we have 
\[\al_1=4, \al_{21}= 1, \al_{31}= 3; \al_2= 4, \al_{32}= 3, \al_{42}=1,\]
\[\al_3= 7, \al_{13}=5, \al_{43}=2; \al_4 = 2, \al_{24}= 1, \al_{14}=1.\]
Thus we have from \ref{4Gor} (5), 
\[\Fr(H) + n_1 = 3n_2+6n_3 = 2n_2+ 4n_3+ 2n_4. \]
These factorizations give inverse polynomial
\[ X_2^2X_3^4( X_4^2 + X_2X_3^2).\]  
Since $X_2X_3^2$ divides $ X_2^2X_3^4$, we have

\[J_{H, \Fr(H) + n_1} = \sum_{k =0}^3 (X_2X_3^2)^kX_4^{2(3-k)}.\]

If we put $J = X_2^2X_3^4( X_4^2 + X_2X_3^2)$, $\Ann_S(J)$ is 
$I_H + (x_4^3)$ and $\dim_k(S/ \Ann_S(J)) = 31$.


\end{ex}



\begin{thebibliography}{99}


\bibitem[BF]{BF} V. Barucci, R. Fr\"oberg, \textit{One-dimensional almost Gorenstein rings}, J. Algebra, \textbf{188} (1997), 418-442.

\bibitem[BFS]{BFS}  V. Barucci, R. Fr\" oberg, M. \c Sahin, On free resolutions of some semigroup rings,
J. Pure Appl. Alg., {\bf 218} (2014), 1107-1116.

\bibitem[BC]{BC}
J. Bertin and P. Carbonne, \textit{Semi-groupes d'entiers et application aux branches}, J. Algebra \textbf{49} (1977), 81-95.

\bibitem[Br]{Br} H. Bresinsky, Symmetric semigroups of integers 
generated by 4 elements, Manuscripta Math. {\bf 17} (1975), 205-219.

\bibitem[Del]{Del} C. Delorme, Sous-mono|" ides d'intersection compl\'e te 
de $\bbN$,
Annales scientifiques de l\' Ecole Normale Sup\'erieure,  S\'erie 4,  Tome 9 (1976) no. 1,  p. 145-154.


\bibitem[DGM]{DGM} M. Delgado, P.A. Garc\'ia-S\'anchez, J. Morais, NumericalSgps - a GAP package, 0.95, 2006, http://www.gap-sytem.org/Packages/numericalsgps.

\bibitem[Et]{Et} K. Eto, Almost {G}orenstein monomial curves in affine four space, J. Algebra {\bf 488} (2017), 362-387.


\bibitem[GW]{GW} S. Goto, K. Watanabe, \textit{On graded rings, I}, J. Math. Soc. Japan {\bf30} (1978), 172-213.
\bibitem[H]{H} J. Herzog, \textit{Generators and relations of abelian semigroups and semigroup rings}, Manuscripta Math. \textbf{3} (1970), 175-193.

\bibitem[He]{He} J. Herzog, Generators and relations of abelian semigroups and semigroup rings, Manuscripta Math. \textbf{3} (1970), 175-193.

\bibitem[Ko]{Ko} J. Komeda, \textit{On the existence of weierstrass points with a certain semigroup generated by 4 elements}, Tsukuba J. Math Vol.6 No.2 (1982), 237-270.

\bibitem[M]{M} F. S. Macaulay, The Algebraic Theory of Modular Systsems, 
Cambridge University (1916).  

\bibitem[Ku]{Ku} E. Kunz,  \textit{The value-semigroup of a one-dimensional Gorenstein ring}.  Proc. Amer. Math. Soc. {\bf 25} (1970) 748 - 751.



\bibitem[Mo]{Mo} A. Moscariello, On the type of an almost Gorenstein monomial curve,

\bibitem[N]{N} H. Nari, \textit{Symmetries on almost symmetric numerical semigroups}, Semigroup Forum,
 \textbf{86} (2013), 140 - 154.

\bibitem[NNW]{NNW} H. Nari, T. Numata, K.-i. Watanabe, \textit{Genus of numerical semigroups generated by three elements}, J. Algebra, \textbf{358} (2012), 67-73.

\bibitem[RS]{RS}
J.~C.~Rosales and P.~A.~Garc{\'{\i}}a-S{\'a}nchez,
\newblock {\it Numerical semigroups},
\newblock  Springer, New York, 2009.

\bibitem[W73]{W73} K. Watanabe, {\it Some examples of one dimensional 
Gorenstein domains},  Nagoya Math. J. {\bf 49} (1973), 101--109.

\bibitem[W20]{W20} K. Watanabe, {\it A short proof of Bresinski's Theorem
 on Gorenstein semigroup rings generated by 4 elements}, 
in Numerical Semigroups: IMNS 2018, Springer INdAM Series book series (SINDAMS, {\bf 40}), 2020, 369-373; 
DOI: $10.1007/978-3-030-40822-0_21$;  arXiv 1804.10330.


\end{thebibliography}
\end{document}